\documentclass[10pt]{amsart}

\usepackage{amsmath,amssymb,amsthm,amscd,amsfonts}
\usepackage{fullpage}
\usepackage{graphicx}
\usepackage{stmaryrd}
\usepackage{float}
\usepackage{hyperref}

\usepackage{algorithm}							
\usepackage[noend]{algpseudocode}					
\usepackage{caption}
\usepackage[margin=1in]{geometry}
\usepackage{url}
\usepackage{xcolor}

\usepackage{mathtools}

\usepackage{amssymb,latexsym}
\usepackage{amsmath,amsfonts,amsthm,amscd,amsxtra,enumerate}
\usepackage{mathtools}
\usepackage{caption}
\usepackage[all]{xy}
\usepackage{multicol}
\usepackage[margin=1in]{geometry}
\usepackage{epsfig}
\usepackage{pgf,tikz}

\usepackage[utf8]{inputenc}

\DeclareMathOperator{\ALPH}{Alph}

\newtheorem{conjecture}{ Conjecture}[section]
\newtheorem{theorem}[conjecture]{ Theorem}
\newtheorem{lemma}[conjecture]{ Lemma}

\newtheorem{proposition}[conjecture]{ Proposition}
\theoremstyle{definition}
\newtheorem{remark}[conjecture]{ Remark}

\newtheorem{definition}[conjecture]{ Definition}

\newtheorem{example}[conjecture]{ Example}
\allowdisplaybreaks

\begin{document}

\title{Rich Words in the Block Reversal of a Word}

\author{Kalpana Mahalingam, Anuran Maity, Palak Pandoh}

\address
	{Department of Mathematics,\\ 
	 Indian Institute of Technology Madras, 
	  Chennai, 600036, India}
	  \email{kmahalingam@iitm.ac.in, anuran.maity@gmail.com, palakpandohiitmadras@gmail.com}





	

%
%

\keywords{Combinatorics on words, rich words, run-length encoding, block reversal}

\maketitle


\begin{abstract}
The block reversal of a word $w$, denoted by $\mathtt{BR}(w)$, is a generalization of the concept of the reversal of a word, obtained by concatenating the  blocks of the word in the reverse order. We characterize non-binary and binary words whose block reversal contains only rich words. We prove that for a binary word $w$, richness of all elements of $\mathtt{BR}(w)$  depends on  $l(w)$, the length of the run sequence of $w$. We show that if all elements of $\mathtt{BR}(w)$ are rich, then  $2\leq l(w)\leq 8$.  We also provide the structure of such words. 
\end{abstract}



\section{Introduction}
Inversions, insertions, deletions, duplications, substitutions and  translocations are some of the  operations that transform a DNA sequence from a primitive sequence (see \cite{Cantone2013,Cantone2010,Zhong2004,mahalingam2020}).
A rearrangement of chromosomes can happen when a single sequence undergoes breakage and one or more segments of the chromosome are shifted by some form of dislocation (\cite{Zhong2004}). 
Mahalingam et al. (\cite{blore}) defined the block reversal of a word which is a rearrangement of strings when dislocations happen through inversions. The authors generalized the concept of the reversal of a word where in place of reversing individual letters, they decomposed the word into factors or blocks  and considered the new word such that the blocks appear in the reverse order. 
The block reversal operation of a word $w$, denoted by $\mathtt{BR}(w)$,  is represented in Figure \ref{f2}.
\begin{figure}[h]    
\centering
 \tikzset{every picture/.style={line width=0.75pt}} 
\begin{tikzpicture}[x=0.75pt,y=0.75pt,yscale=-1,xscale=1]
\draw    (101,438.65) -- (423.79,438.65) ;
\draw    (102.51,485.23) -- (425.3,485.23) ;
\draw   (102.51,485.23) -- (128.36,485.23) -- (128.36,490.41) -- (102.51,490.41) -- cycle ;
\draw   (385.88,485.23) -- (425.3,485.23) -- (425.3,490.41) -- (385.88,490.41) -- cycle ;
\draw   (101,438.65) -- (140.42,438.65) -- (140.42,443.83) -- (101,443.83) -- cycle ;
\draw   (332.9,485.23) -- (385.88,485.23) -- (385.88,490.41) -- (332.9,490.41) -- cycle ;
\draw   (140.42,438.65) -- (193.4,438.65) -- (193.4,443.83) -- (140.42,443.83) -- cycle ;
\draw   (397.94,438.65) -- (423.79,438.65) -- (423.79,443.83) -- (397.94,443.83) -- cycle ;
\draw   (128.36,485.23) -- (173.05,485.23) -- (173.05,490.41) -- (128.36,490.41) -- cycle ;
\draw   (353.25,438.65) -- (397.94,438.65) -- (397.94,443.83) -- (353.25,443.83) -- cycle ;
\draw   (232.02,485.23) -- (263.9,485.23) -- (263.9,490.41) -- (232.02,490.41) -- cycle ;
\draw   (250.98,438.78) -- (282.86,438.78) -- (282.86,443.96) -- (250.98,443.96) -- cycle ;

\draw (106.33,420.03) node [anchor=north west][inner sep=0.75pt]    {$B_{1}$};
\draw (147.79,419.63) node [anchor=north west][inner sep=0.75pt]    {$B_{2}$};
\draw (398.59,419.63) node [anchor=north west][inner sep=0.75pt]    {$B_{k}$};
\draw (256.56,419.14) node [anchor=north west][inner sep=0.75pt]    {$B_{i}$};
\draw (359.47,418.09) node [anchor=north west][inner sep=0.75pt]    {$B_{k-1}$};
\draw (102.51,490.41) node [anchor=north west][inner sep=0.75pt]    {$B_{k}$};
\draw (134.88,490.59) node [anchor=north west][inner sep=0.75pt]    {$B_{k-1}$};
\draw (237.72,491.04) node [anchor=north west][inner sep=0.75pt]    {$B_{i}$};
\draw (346.75,492.69) node [anchor=north west][inner sep=0.75pt]    {$B_{2}$};
\draw (391.97,492.04) node [anchor=north west][inner sep=0.75pt]    {$B_{1}$};
\draw (256.94,448.95) node [anchor=north west][inner sep=0.75pt]    {$w$};
\draw (251.91,502.46) node [anchor=north west][inner sep=0.75pt]    {$w'$};
\end{tikzpicture} \caption{For $w \in \Sigma^*$, $w'\in \mathtt{BR}(w)$}
    \label{f2}
\end{figure}
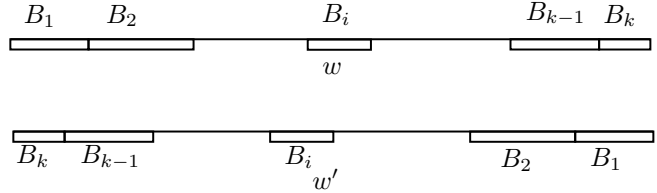
If the word $w$ can be expressed as a concatenation of its  factors or blocks $B_i$ such that $w=B_1B_2\cdots B_k$, then $w'=B_kB_{k-1}\cdots B_1$  is an element of $\mathtt{BR}(w)$. Since there are multiple ways to divide a word into blocks, the block reversal of a word forms a set.

Mahalingam et al. (\cite{blore}) proved that there is a strong connection between the block reversal  and the non-overlapping inversion of a word. A non-overlapping inversion of a word is a set of inversions that do not overlap with each other. 
In 1992, Sch\"oniger et al. (\cite{Schon1992}) presented a heuristic for computing the edit distance when non-overlapping inversions are allowed.
They presented an $\mathcal{O}$($n^
6$) exact solution for the
alignment with the non-overlapping inversion problem and showed the non-overlapping inversion operation ensures that all inversions occur in one mutation step. Instances of problems considering the non-overlapping inversions include the string alignment problem,  the edit distance problem, the approximate matching problem, etc. (\cite{Cantone2010,Cantone20,Kece93,Augusto2006}). Kim et al. (\cite{Kim2015}) studied the non-overlapping inversion on strings from a formal language theoretic approach.

A word is a palindrome if it is equal to its reverse. Let $|w|$ be the  length of the word $w$. It was proved by Droubay et al.  (\cite{epi}) that a word $w$ has at most $|w|$ non-empty distinct palindromic factors. The words that achieve the bound were referred to as rich words by Glen et al. (\cite{palrich}). Several properties of rich words were studied in the literature (see \cite{pal3,epi,palrich,guo}). Droubay et al. (\cite{epi}) proved that a word $w$  contains exactly $|w|$ non-empty distinct palindromic factors iff the longest
palindromic suffix of any prefix $p$ of $w$ occurs exactly once in $p$. Guo et al. (\cite{guo}) provided necessary and sufficient conditions for richness in terms of the run-length encoding of binary words. It is known that on a binary alphabet, the set of rich words contain factors of the period-doubling words, factors of Sturmian words, factors of complementary symmetric Rote words, etc. (see \cite{palrot1, epi, schaclo}). In a non-binary alphabet, the set of rich words contain, for example factors of Arnoux–Rauzy words and factors of words coding symmetric interval exchange.

There are many results in the literature regarding the occurrence of rich words in infinite and finite words, but there are significantly fewer results about the occurrence of rich words in a language. The occurrence of rich words in the conjugacy class of a word $w$, denoted by $C(w)$, is a well-studied concept in literature (see \cite{careymusic, palrich,  restivobwt, oeis1}).
 Shallit et al. (\cite{oeis1}) calculated the number of binary words $w$ of a particular length such that every conjugate of $w$ is rich.
A word $w$ is said to be circularly rich if all of the conjugates of $w$ (including itself) are rich, and $w$ is a product of two palindromes.
 Glen et al.  (\cite{palrich}) studied circularly rich words and proved the equivalence conditions for circularly rich words. They proved that a  word $w$ is circularly rich iff the infinite word $w^\omega$ is rich iff $ww$ is rich where $w^\omega$ is a word formed by concatenating infinite copies of $w$.
 Restivo et al. (\cite{restivobwt, bwtRESTIVO}) outlined relationships between circularly rich words and the Burrows–Wheeler transform, a highly efficient data compression algorithm.

 In many musical contexts, scale and rhythmic patterns are extended beyond a single iteration of the interval of periodicity. From the equivalent conditions for circularly rich words, proved by Glen et al. (\cite{palrich}), if $w$ is the step pattern of an octave-based scale and is rich, and if $ww$ is also rich, then the property can be extended without limit $(w^\omega)$. Lopez et al. (\cite{lopezmusic}) examined that circular palindromic richness is inherent in numerous musical contexts, including all well-formed and maximally even sets and also in non well-formed scales which display three different step sizes. 
Carey (\cite{careymusic}) also deeply studied circularly rich words from a music theory perspective. He proposed that perfectly balanced scales that display circular palindromic richness and also exhibit relatively few step differences may prove to be advantageous from a cognitive and musical perspective. Since block reversal operation is a generalization of conjugate operation, the study of rich words in the block reversal of the word has  possible applications in music theory and data compression techniques.

In this paper, we characterize words whose block reversal contains only rich words. 
We find a necessary and sufficient condition for a non-binary word such that all elements in its block reversal are rich.
For a binary word $w'$, we prove that the richness of elements of $\mathtt{BR}(w')$
 depends on $l(w')$ which is  the length of the run sequence of $w'$. We show that for a binary word $w$, if  all elements of $\mathtt{BR}(w)$ are rich, then  $2\leq l(w) \leq 8$. We also find the  structure of binary words whose block reversal consists of only rich words.
The paper is organized as follows.
In Section \ref{sec3}, we prove that  for a non-binary word $w$, all elements of $\mathtt{BR}(w)$ are rich iff $w$ is either of the form $a_1a_2a_3 \cdots a_k$ or $a_j^{|w|}$ where each $a_i\in \Sigma$ is distinct.
 In Section \ref{sec4}, we  show that for a  binary word $w$, all elements of $\mathtt{BR}(w)$ are rich if $l(w) =2$.  We also show that if all elements of $\mathtt{BR}(w)$ are rich, then  $2\leq l(w) \leq 8$. We discuss the case when $3\leq l(w)\leq 8$ separately in detail and provide the structure of words such that all elements in their block reversal are rich. We end the paper with a few concluding remarks.

\section{Basic definitions and notations}\label{sec2}
Let $\Sigma$ be a non-empty set of letters. A word $w=[a_{i}]$ over $\Sigma$ is a finite sequence of letters from $\Sigma$ where $a_i$ is the $i$-$th$ letter of $w$. We denote the empty word by $\lambda$. By $\Sigma^*$, we denote the set of all words over $\Sigma$ and  $\Sigma^+=\Sigma^*\setminus \{\lambda\}$. The length of a word $w$, denoted by $|w|$, is the number of letters in $w$.  $\Sigma^n$ and $\Sigma^{\geq n}$ denote the set of all words of length $n$ and the set of all words of length greater than or equal to $n$, respectively. For $a \in \Sigma$, $|w|_a$ denotes the number of occurrences of $a$ in $w$. A word $u$ is a factor or block of the word $w$ if $w=puq$ for some $p, q\in \Sigma^*$. If $p= \lambda$, then $u$ is a prefix of $w$ and if $q = \lambda$, then $u$ is a suffix of $w$.  Let $Fac(w)$ denote the set of all  factors of the word $w$. $\ALPH(w)$ denotes the set of all letters in $w$. Two words $u$ and $v$ are called conjugates of each other if there exist $x,y\in \Sigma^*$ such that $u=xy$ and $v=yx$.
For a word $w= w_1w_2\cdots w_n$ such that $w_i\in \Sigma$, the reversal of $w$, denoted by $w^R$, is the word $w_n\cdots w_2w_1$. A word $w$ is a palindrome if $w=w^R$. By $P(w)$, we denote the number of  all non-empty palindromic factors of $w$. A word $w$ has at most $|w|$ distinct non-empty palindromic factors. The words that achieve the bound are called rich words. 

Every non-empty word $w$ over $\Sigma$ has a unique encoding of the form $w = a_1^{n_1}a_2^{n_2}\ldots a_k^{n_k},$ where $n_i \geq 1$, $a_i \neq a_{i+1}$ and $a_i\in \Sigma$ for all $i$. This encoding is called run-length encoding of $w$ (\cite{guo}). The word $a_1 a_2 \ldots a_k$ is called the trace of $w$. The sequence $(n_1, n_2, \ldots, n_k)$ is called the run sequence of $w$ and the length of the run sequence of $w$ is $k$.  For any binary word $w$ over $\Sigma = \{a,b\}$, the complement of $w$, denoted by $w^c$, is the word  $\phi(w)$ where, $\phi$ is a morphism such that $\phi(a)=b$ and $\phi(b)=a$. For example, if $w= ababb$, then $w^c = babaa$.
We recall the definition of the  block reversal of a word from Mahalingam et al. (\cite{blore}).
\begin{definition}\cite{blore} \label{br}
Let $w,\;B_i \in \Sigma^+$ for all $i$. The block reversal of $w$, denoted by $\mathtt{BR}(w)$, is the set $$\mathtt{BR}(w) =\{ B_tB_{t-1}\cdots  B_1 \;:\; \;w=B_1B_2 \ldots  B_t, \; ~t\ge 1\}.$$
\end{definition}
Note that a word can be divided into a maximum of $|w|$ blocks. We illustrate Definition \ref{br}  with the help of an example.

\begin{example}\label{e2}
Let $\Sigma=\{a, b,c\}$. Consider $u=abbc$ over $\Sigma$. Then, $$\mathtt{BR}(u) = \{ cbab, cbba, cabb, bbca, bcab, abbc, bcba \}.$$
\end{example}

For more information on  words, the reader is referred to Lothaire (\cite{Lothaire1997}) and Shyr (\cite{Shyr2001}). 

\section{Block Reversal of Non-binary Words}\label{sec3}
It is well known that a rich word $w$ contains exactly $|w|$  distinct palindromic factors.
In this section, we find a necessary and sufficient condition for a non-binary word such that all elements in its block reversal are rich.

We recall the following from Glen et al. (\cite{palrich}). 
\begin{theorem}\cite{palrich}\label{tglen}
For any word $w$, the following properties are equivalent:\\
(i) $w$ is rich;\\
(ii) for any factor $u$ of $w$, if $u$ contains exactly two occurrences of a palindrome $p$ as a prefix and as a suffix only, then $u$ is itself a palindrome.
\end{theorem}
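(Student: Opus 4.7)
The plan is to derive the equivalence from the characterization of richness already cited from Droubay et al., namely that $w$ is rich if and only if for every prefix $v$ of $w$ the longest palindromic suffix of $v$ occurs exactly once in $v$ (call this property DJP). I will also use the standard consequence that every factor of a rich word is itself rich, so the DJP characterization may be applied to any factor.

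For the direction (i) $\Rightarrow$ (ii), suppose $w$ is rich and let $u$ be a factor of $w$ in which a palindrome $p$ appears exactly twice, as prefix and as suffix. Since $u$ is rich, its longest palindromic suffix $q$ is unioccurrent in $u$. I would split on $|q|$ versus $|p|$. If $|q|=|p|$, then $q=p$, and the unioccurrence of $q$ contradicts the two prescribed occurrences of $p$. If $|q|>|p|$, then $p$ is a suffix of the palindrome $q$, hence $p=p^R$ is also a prefix of $q$; since $q$ is a suffix of $u$ of length $|q|$, this forces an occurrence of $p$ at position $|u|-|q|$ in $u$. Because $p$ is permitted only two occurrences in $u$, this middle occurrence must coincide with the prefix occurrence, giving $|q|=|u|$ and hence $u=q$, which is a palindrome.

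For (ii) $\Rightarrow$ (i), I would verify the DJP condition directly. Fix a prefix $v$ of $w$, let $q$ be its longest palindromic suffix, and assume toward a contradiction that $q$ occurs at least twice in $v$. Let $i$ be the starting position of the rightmost occurrence of $q$ in $v$ distinct from the suffix occurrence, and let $u$ be the suffix of $v$ starting at position $i$. By the maximality of $i$, the palindrome $q$ occurs in $u$ exactly twice, once as a prefix and once as a suffix, and no third occurrence can lie strictly between them. Hypothesis (ii) then forces $u$ to be a palindrome, but $u$ is a suffix of $v$ with $|u|>|q|$, contradicting the maximality of $q$ as a palindromic suffix of $v$. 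Thus the DJP condition holds and $w$ is rich.

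The main obstacle I expect is the positional bookkeeping in (i) $\Rightarrow$ (ii): once $p$ is forced to appear simultaneously as prefix and suffix of the palindromic suffix $q$ of $u$, one must rule out any intermediate occurrence of $p$, and the cleanest route seems to be using the "exactly two occurrences" hypothesis to collapse positions and conclude $q=u$. The other direction is essentially a one-line application of the hypothesis once the correct \emph{rightmost complete return} factor has been isolated.
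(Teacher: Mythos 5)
The paper does not actually prove Theorem \ref{tglen}; it is imported verbatim from Glen et al.\ \cite{palrich}, so there is no in-paper argument to compare against. Your proof is correct and is essentially the standard derivation from the Droubay--Justin--Pirillo unioccurrence property quoted in the paper's introduction from \cite{epi}: in (i)$\Rightarrow$(ii) the case analysis on the longest palindromic suffix $q$ of $u$ correctly forces a third occurrence of $p$ at the start of $q$ and collapses it onto the prefix occurrence, yielding $u=q$; in (ii)$\Rightarrow$(i) the choice of the \emph{rightmost} non-suffix occurrence of $q$ isolates exactly the complete-return factor to which hypothesis (ii) applies, and the resulting palindromic suffix longer than $q$ gives the contradiction. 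The one external ingredient you use beyond the unioccurrence characterization is that factors of rich words are rich (needed to apply that characterization to the factor $u$ rather than to a prefix of $w$); this is Lemma \ref{rich}, which the paper likewise cites without proof, and since that closure property is established in \cite{palrich} independently of the complete-return characterization, your argument is not circular.
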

\begin{lemma}\cite{palrich}\label{rich}
If $w$ is rich, then
\begin{itemize}
    \item  all factors of $w$   are rich.
    \item  $w^R$ is  rich.
\end{itemize}
\end{lemma}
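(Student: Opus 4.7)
The plan is to prove the two statements separately, using the equivalence from Theorem \ref{tglen} for the first part and a direct bijective/counting argument for the second.

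For the first bullet, I would take an arbitrary factor $v$ of $w$ and apply the characterization in Theorem \ref{tglen}(ii) to $v$. The key observation is that factoring is transitive: any factor $u$ of $v$ is automatically a factor of $w$. So if $u$ contains exactly two occurrences of some palindrome $p$, one as a prefix and one as a suffix and no others, then $u$ is a factor of the rich word $w$, so by Theorem \ref{tglen}(i) $\Rightarrow$ (ii) applied to $w$, the word $u$ must be a palindrome. Since this holds for every factor $u$ of $v$, the reverse direction (ii) $\Rightarrow$ (i) of Theorem \ref{tglen} yields that $v$ is rich. I would also briefly note why trying to prove this by directly counting palindromic factors of $v$ is clumsy compared to this characterization-based route.

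For the second bullet, I would use that the map $u \mapsto u^R$ is a length-preserving bijection between $Fac(w)$ and $Fac(w^R)$, and that it fixes palindromes pointwise (since $p$ is a palindrome iff $p = p^R$). Consequently it restricts to a bijection between the sets of distinct palindromic factors of $w$ and of $w^R$, giving $P(w^R) = P(w) = |w| = |w^R|$, so $w^R$ is rich.

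The only minor subtlety, which I would flag explicitly, is making sure the ``occurs exactly twice, once as prefix and once as suffix'' condition in Theorem \ref{tglen}(ii) transfers correctly between $v$ and $w$ for the first part: the condition is entirely intrinsic to the factor $u$, so whether we view $u$ as sitting inside $v$ or inside $w$ is irrelevant, which is exactly what makes the argument work. I do not anticipate a real obstacle; both claims are essentially immediate consequences of Theorem \ref{tglen} together with the symmetry of palindromicity under reversal.
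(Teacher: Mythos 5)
Your proof is correct. The paper does not actually prove this lemma---it is recalled from Glen et al.\ \cite{palrich} with only a citation---but both of your arguments are the standard ones and are complete: the first bullet works precisely because the complete-return condition in Theorem \ref{tglen}(ii) is intrinsic to the factor $u$ (it says nothing about the ambient word), so it transfers verbatim from factors of $w$ to factors of a factor $v$ of $w$; the second follows since reversal is a bijection from $Fac(w)$ to $Fac(w^R)$ that fixes palindromes, so the two words have the same set of palindromic factors and $P(w^R)=P(w)=|w|=|w^R|$.
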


We first  give a necessary condition under which $\mathtt{BR}(w)$ contains at least one rich word.
\begin{lemma}\label{45tn}
 Let $w \in \Sigma^n$. If $\mathtt{BR}(w)$ has no rich element, then $|\ALPH(w)|< n-1$.
\end{lemma}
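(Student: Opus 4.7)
The plan is to prove the contrapositive: if $|\ALPH(w)| \ge n-1$, then $\mathtt{BR}(w)$ contains at least one rich element. Since the one-block decomposition $w = B_1$ shows $w \in \mathtt{BR}(w)$, the case $|\ALPH(w)| = n$ is immediate: all letters of $w$ are distinct, so the only palindromic factors of $w$ are the $n$ single letters (any longer palindrome would force a repeated letter), giving $P(w) = n = |w|$ and hence $w$ is rich.

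The nontrivial case is $|\ALPH(w)| = n-1$, in which exactly one letter $a$ occurs twice and every other letter occurs once. I would write
\[
w \;=\; u_1\, a\, u_2\, a\, u_3,
\]
where $u_1, u_2, u_3$ are (possibly empty) words over $\Sigma \setminus \{a\}$ whose letters, taken together, are pairwise distinct. The strategy is to select a block decomposition whose block reversal $w'$ places the two occurrences of $a$ adjacent, so that $w'$ has the form $x\, aa\, y$ where $xy$ consists of $n-2$ pairwise distinct letters, none equal to $a$. If $u_2 = \lambda$, the trivial decomposition already gives $w = w' = u_1\, aa\, u_3$. If $u_2 \neq \lambda$, I would use the decomposition
\[
w \;=\; (u_1)\,(a)\,(u_2\, a)\,(u_3),
\]
omitting any empty factor among $u_1, u_3$; its block reversal is $w' = u_3\, u_2\, aa\, u_1$, which has the desired shape.

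For any $w'$ of the form $x\, aa\, y$ with $xy$ having pairwise distinct letters none equal to $a$, the palindromic count runs as follows. The $n-1$ distinct single letters of $w'$ contribute $n-1$ palindromes. Any palindrome of length $\ge 2$ must start and end with the same letter, and since each non-$a$ letter appears at most once in $w'$, such a palindrome must begin and end with $a$; the only $a$'s in $w'$ being the adjacent pair in the $aa$ block, any such palindrome must equal $aa$ itself. Hence $P(w') = n = |w'|$, so $w'$ is rich.

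The main obstacle I expect is the bookkeeping required to verify that the counting argument on $w' = x\, aa\, y$ truly captures every palindromic factor and that the proposed decomposition is well-defined in each sub-case (empty $u_1$ or $u_3$). Both concerns are mild: the hypothesis $|\ALPH(w)| = n-1$ forces every non-$a$ letter of $w$ to appear exactly once in $u_1 u_2 u_3$, which rules out any long palindrome with non-$a$ endpoints, and the four sub-cases arising from the (non)emptiness of $u_1, u_3$ are handled uniformly by the single decomposition above.
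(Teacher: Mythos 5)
Your proposal is correct and follows essentially the same route as the paper: both argue the contrapositive, and in the case $|\ALPH(w)| = n-1$ both exhibit the same witness $u_3 u_2 a^2 u_1 \in \mathtt{BR}(w)$ obtained by merging the two occurrences of the repeated letter. The only difference is that you spell out the palindrome count ($n-1$ distinct single letters plus $aa$) showing this witness is rich, which the paper leaves implicit.
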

\begin{proof}
 Let $w \in \Sigma^n$ such that $\mathtt{BR}(w)$ contains no rich element. We prove that if $|\ALPH(w)| \geq  n-1$, then there exists at least one rich word in $\mathtt{BR}(w)$. 
If $|\ALPH(w)|=n $, then all elements in $\mathtt{BR}(w)$ are rich. 
If $|\ALPH(w)|=n-1 $, then for $u_1, u_2, u_3 \in \Sigma^*$, $w = u_1 a u_2 a u_3$ such that $a \notin \ALPH(u_i)$ for all $i$ and 
$\ALPH(u_i) \cap \ALPH(u_j) = \emptyset$ for $i\neq j$. Now, $u_3u_2a^2u_1 \in \mathtt{BR}(w)$ is a rich word.
\end{proof}
We now give  an example of a word $w \in \Sigma^n$ with $|\ALPH(w)|=n-2$ such that $\mathtt{BR}(w)$ contains no rich word.

\begin{example}\label{45tnr}
For $a, b \in \Sigma$, consider $w= u_1 \textbf{a} u_2 \textbf{b} u_3 \textbf{b} u_4 \textbf{a} u_5$ such that $a, b \notin \ALPH(u_i)$, $|u_i|\geq 3$ for each $i$, $\ALPH(u_i) \cap \ALPH(u_j)=\emptyset$ for $i\neq j$ and $\sum_{i=1}^{i=5}|\ALPH(u_i)|=|w|-4$. Then, $|\ALPH(w)|=|w|-2$.
We denote by $\pi(w)$, the set of all permutations of the word $w$, i.e., $\pi(w)=\{u\in \Sigma^*|\; |u|_a=|w|_a \text{ for all } a\in \Sigma \}$. One can easily observe that $\mathtt{BR}(w)$ is a subset of $\pi(w)$.
If $\pi(w)$ has no rich words, then $\mathtt{BR}(w)$ also has no rich words. Suppose there is a $ v \in \pi(w)$ such that $v$ is rich then, as  $|\ALPH(v)|=|v|-2$,   $|v|_a=|v|_b=2$,
 by Theorem \ref{tglen}, we have, $\{ a \alpha a, b \alpha' b~ |~  \alpha, \alpha' \in \Sigma^{\geq 2} \text{ such that } \alpha \neq bzb, \alpha' \neq az'a \text{ where } z, z' \in \Sigma \cup \{\lambda\} \}$ $\cap \; Fac(v) = \emptyset$. Otherwise, if $a \alpha a$ or $b \alpha' b$ lies in $Fac(v)$, then as $\alpha \neq bzb$, $\alpha' \neq az'a$  where  $z, z' \in \Sigma \cup \{\lambda\}$, $\alpha, \alpha' \in \Sigma^{\geq 2}$, $|\alpha|_a = 0$ and $|\alpha'|_b = 0$, we have, $a \alpha a$ and $b \alpha' b$ are not palindromes, which contradicts Theorem \ref{tglen}. 
This implies that $v$ is of  one of the following forms:
\begin{align}\label{algrt1}
    v_1 x^2 v_2 y^2 v_3,\; v_1 x x_1 x v_2 y x_2 y v_3, \; v_1 xy^2x v_2,\; v_1 xx_1x v_2 y^2 v_3, \; v_1 y^2 v_2 xx_1x v_3, \; v_1 xyxy v_2, \; v_1 x y x_1 y x v_2  
\end{align}
where $x \neq y \in \{a, b\}$, $ x_1, x_2 \in \Sigma \setminus \{a, b\}$  and $v_i \in \Sigma^*$ for all $i$.
Now, from the structure of $w$, we observe that $\mathtt{BR}(w)$ does not contain any element of forms in (\ref{algrt1}). 
Thus, $v \notin \mathtt{BR}(w)$. Now, $\mathtt{BR}(w) \subseteq \pi(w)$ and $v\in\pi(w) $ is  rich implies  no element of $\mathtt{BR}(w)$ is rich.
\end{example}
Note that some elements of $\mathtt{BR}(w)$ may not be rich even when $w$ is rich.
For example, the word $w=abbc$ is rich, but $ bcab \in \mathtt{BR}(w)$ is not rich. 
We now give a necessary and sufficient condition on a non-binary word $w$ such that all elements of $\mathtt{BR}(w)$ are rich. We need the  following results.

\begin{lemma} \label{nori}
Let  $w = a_1^{n_1}a_2uv$  where $a_1\neq a_2$, $u \in \{a_1,a_2\}^+$, $v \in (\Sigma \setminus \{ a_1, a_2\})^+$ and $n_1 \geq 1$. Then, there exists an element in $\mathtt{BR}(w)$ which is not rich. 
\end{lemma}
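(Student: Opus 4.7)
My plan is to apply Theorem~\ref{tglen} directly: to show that some $w' \in \mathtt{BR}(w)$ is not rich, it suffices to exhibit a factor $F$ of $w'$ in which a palindrome $p$ occurs only as prefix and suffix (so exactly twice), yet $F$ is not itself a palindrome. Throughout I will take $p \in \{a_1, a_2\}$, and the key leverage is that every letter of $v$ lies in $\Sigma \setminus \{a_1, a_2\}$; this will obstruct the palindromicity equations that arise by mismatching a first letter.

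The first step is a case analysis on how $u$ ends. If $u = u_0 a_2$ for some $u_0 \in \{a_1,a_2\}^*$, I would use the split $w = [a_1^{n_1} a_2 u_0][a_2 v]$, which produces $w' = a_2 v a_1^{n_1} a_2 u_0 \in \mathtt{BR}(w)$. Its prefix $F = a_2 v a_1^{n_1} a_2$ contains $a_2$ only at the endpoints (since neither $v$ nor $a_1^{n_1}$ contains $a_2$), and $F = F^R$ reduces to $v a_1^{n_1} = a_1^{n_1} v^R$, which fails at the first letter because $v$ does not begin with $a_1$. So $w'$ is not rich.

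If $u$ ends in $a_1$, write $u = u' a_1^{\ell}$ with $\ell \geq 1$ maximal. In the sub-case $u' = \lambda$ (i.e.\ $u = a_1^{\ell}$) I would use the three-block split $w = [a_1^{n_1}][a_2][a_1^{\ell} v]$, giving $w' = a_1^{\ell} v a_2 a_1^{n_1}$; the factor $F = a_1 v a_2 a_1$ bridging the last $a_1$ of $a_1^{\ell}$ and the first $a_1$ of $a_1^{n_1}$ has $a_1$ only at the endpoints, and $F = F^R$ would force $v a_2 = a_2 v^R$, which fails because $v$ does not begin with $a_2$. In the remaining sub-case $u' = u_{**} a_2$, so $u = u_{**} a_2 a_1^{\ell}$; I would use the split $w = [a_1^{n_1} a_2 u_{**}][a_2 a_1^{\ell} v]$, giving $w' = a_2 a_1^{\ell} v a_1^{n_1} a_2 u_{**}$, and take $F = a_2 a_1^{\ell} v a_1^{n_1} a_2$. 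Comparing leading $a_1$-runs in $F$ and $F^R$ shows that $F$ is a palindrome iff $\ell = n_1$ and $v = v^R$.

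The main obstacle is this degenerate configuration $\ell = n_1$ with $v$ a palindrome, where the above $F$ accidentally becomes a palindrome. To handle it I would switch to a different split: use $w = [a_1^{n_1}][a_2 u_{**}][a_2 a_1^{n_1} v]$ when $u_{**} \neq \lambda$, and $w = [a_1^{n_1}][a_2][a_2 a_1^{n_1} v]$ when $u_{**} = \lambda$ (in which case $w = a_1^{n_1} a_2^2 a_1^{n_1} v$). Both splits yield a $w'$ having $F = a_2 a_1^{n_1} v a_2$ as a prefix; inside this prefix $a_2$ still occurs only at the endpoints, while $F = F^R$ would demand $a_1^{n_1} v = v a_1^{n_1}$, which fails at the first letter because $v_1 \neq a_1$, \emph{independent} of whether $v$ is a palindrome. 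Thus Theorem~\ref{tglen} produces a non-rich element of $\mathtt{BR}(w)$ in every case, completing the proof.
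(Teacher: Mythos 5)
Your proof is correct and follows essentially the same route as the paper: both arguments case on the last letter of $u$ and apply Theorem~\ref{tglen} to a factor of the form $a_i\, x\, a_i$ with $a_i \notin \ALPH(x)$, and your Case~1 produces exactly the paper's witness $a_2 v a_1^{n_1} a_2 u_0$. The only difference is that your sub-case 2b and its degenerate configuration ($\ell = n_1$, $v=v^R$) are avoidable: when $u$ ends in $a_1$, the single split $[a_1^{n_1}][a_2][uv]$ already yields $uva_2a_1^{n_1} \in \mathtt{BR}(w)$, which contains the non-palindromic factor $a_1 v a_2 a_1$ regardless of the internal shape of $u$.
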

\begin{proof}
Let $w = a_1^{n_1} a_2 u v$ such that $a_1\neq a_2$, $u \in \{a_1,a_2\}^+$, $v \in (\Sigma \setminus \{ a_1, a_2\})^+$ and $n_1 \geq 1$. Let $u = u' a_i$
  where $i=1$ or $2$.  Then, $w = a_1^{n_1} a_2 u'a_i v $. 
 Note that $w'=u' a_i v  a_2 a_1^{n_1} \in \mathtt{BR}(w)$  and  $w''=a_i v a_1^{n_1} a_2 u' \in \mathtt{BR}(w)$.   The factor $ a_i v a_2  a_1$ of $w'$  is not a palindrome for $i=1$ as $a_i\notin \ALPH(v)$ and similarly the factor  $ a_iv a_1^{n_1}  a_2$ of $w''$ is not a palindrome for $i=2$.  Hence, by Theorem \ref{tglen} and Lemma \ref{rich}, $w'\in \mathtt{BR}(w)$ is not rich when $i=1$ and  $w''\in \mathtt{BR}(w)$ is not rich when $i=2$.

    
\end{proof}

We also have the following:
\begin{lemma}\label{notrich}
For  $u_1, u_2, u_3, u_4, u_5 \in \Sigma^*$, consider $w= u_1 a_i u_2 a_j u_3 a_{k} u_4 a_i u_5$ such that $a_j \neq a_k$, $a_j \neq a_i \neq a_k$ and  $a_k$ is not a suffix of $u_3$. Then, there exists an element in $\mathtt{BR}(w)$ which is not rich.
\end{lemma}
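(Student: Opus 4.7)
My plan is to exhibit a specific element $w'\in \mathtt{BR}(w)$ that contains a factor of the form $a_i X a_i$ in which $a_i$ occurs only as prefix and suffix and $X$ is not a palindrome, so that Theorem \ref{tglen} forces $w'$ not to be rich. Concretely, I would take the $3$-block decomposition
\[
B_1 = u_1 a_i u_2, \quad B_2 = a_j u_3 a_k, \quad B_3 = u_4 a_i u_5,
\]
which produces
\[
w' = B_3 B_2 B_1 = u_4 a_i u_5 a_j u_3 a_k u_1 a_i u_2 \in \mathtt{BR}(w),
\]
and then look at the factor $f = a_i u_5 a_j u_3 a_k u_1 a_i$ of $w'$, so $f = a_i X a_i$ with $X = u_5 a_j u_3 a_k u_1$. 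As a preparatory step I would re-choose the two explicit $a_i$'s in the factorization of $w$ as the rightmost $a_i$ before $a_j$ and the leftmost $a_i$ after $a_k$; this arranges $a_i \notin \ALPH(u_2) \cup \ALPH(u_4)$, and in the principal case one also has $a_i, a_j, a_k \notin \ALPH(u_1) \cup \ALPH(u_3) \cup \ALPH(u_5)$, so that $a_j$ and $a_k$ each occur exactly once in $X$ and $a_i$ occurs in $f$ only at the two ends.

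The heart of the argument is then that $X$ cannot be a palindrome in this principal case. If it were, $X = X^R = u_1^R a_k u_3^R a_j u_5^R$, so comparing the unique positions of $a_j$ and of $a_k$ in $X$ and in $X^R$ gives
\[
|u_5|+1 = |u_1|+|u_3|+2 \qquad \text{and} \qquad |u_5|+|u_3|+2 = |u_1|+1.
\]
The first equation yields $|u_5|-|u_1| = |u_3|+1$ and the second yields $|u_5|-|u_1| = -(|u_3|+1)$, forcing $|u_3|+1 = -(|u_3|+1)$, i.e., $|u_3|=-1$, which is impossible. Hence $X$ is not a palindrome, $f$ is not a palindrome, and since $a_i$ (a palindrome) appears in $f$ only as prefix and suffix, Theorem \ref{tglen} implies that $w'$ is not rich, proving the lemma in the principal case.

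The main obstacle I anticipate is the reduction to this principal case: if $a_i$, $a_j$, or $a_k$ occurs inside $u_1$, $u_3$, or $u_5$, the uniqueness hypothesis used in the counting argument fails and $f$ itself may even be a palindrome. In this situation I would replace $f$ by the shorter sub-factor of $w'$ bounded by the two $a_i$'s of $w'$ that are closest to, and bracket, the $a_j u_3 a_k$ segment with no $a_i$ in between. The hypothesis that $a_k$ is not a suffix of $u_3$ enters exactly here: it guarantees that inside this shorter window the left- and right-hand ends of the middle string remain distinct (the boundary letter $a_m\ne a_k$ supplied by the hypothesis forbids a $a_k a_k$ pattern that would otherwise allow palindromic matching), so an analogous positional counting argument on the nearest surviving occurrences of $a_j$ and $a_k$ again rules out palindromicity and closes the proof.
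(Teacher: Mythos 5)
Your ``principal case'' is sound: with the decomposition $B_1=u_1a_iu_2$, $B_2=a_ju_3a_k$, $B_3=u_4a_iu_5$ and the window $f=a_iXa_i$, $X=u_5a_ju_3a_ku_1$, the positional count does rule out $X$ being a palindrome \emph{when} $a_i,a_j,a_k$ are absent from $u_1,u_3,u_5$. But the reduction to that case is a genuine gap, not a technicality. First, if $a_i\in\ALPH(u_3)$ there is \emph{no} pair of $a_i$'s in your $w'$ that brackets both $a_j$ and $a_k$ with no $a_i$ in between, so no shrinking of the window can restore the hypotheses of Theorem \ref{tglen}; this case needs a different element of $\mathtt{BR}(w)$ altogether. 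Second, even when $a_i\notin\ALPH(u_1u_3u_5)$, occurrences of $a_j,a_k$ inside $u_1$ or $u_5$ can make your chosen $w'$ rich: take $u_1=u_2=u_3=u_4=\lambda$ and $u_5=a_ka_j$, so $w=a_ia_ja_ka_ia_ka_j$ (all hypotheses of the lemma hold, including that $a_k$ is not a suffix of $u_3=\lambda$). Your $w'=B_3B_2B_1=a_ia_ka_ja_ja_ka_i$ is a palindrome and is rich (it has $6$ palindromic factors: $a_i,a_k,a_j,a_ja_j,a_ka_ja_ja_k$ and itself), and since it contains only two $a_i$'s your fallback window is $f$ itself. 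So your construction proves nothing here, and your proposed use of the ``$a_k$ is not a suffix of $u_3$'' hypothesis (forbidding an $a_ka_k$ pattern at a window boundary) is not where that hypothesis actually does its work.

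The paper avoids both problems by choosing a sharper decomposition: reversing $u_1\mid a_iu_2\mid a_j\mid u_3\mid a_k\mid u_4a_i\mid u_5$ gives $w'=u_5u_4\,a_ia_ku_3a_ja_i\,u_2u_1$, whose window $a_ia_ku_3a_ja_i$ places $a_k$ and $a_j$ \emph{adjacent} to the two bracketing $a_i$'s; non-palindromicity is then immediate from $a_j\neq a_k$, with no assumption on the contents of $u_1,u_5$ or on $a_j,a_k$ in $u_3$ (on the example above this yields $a_ka_ja_ia_ka_ja_i$, which is indeed not rich). The only residual case is $a_i\in\ALPH(u_3)$, which the paper handles by writing $u_3=u_3'a_iu_3''$ with $|u_3''|_{a_i}=0$ and producing a second element whose richness forces $a_ia_ku_3''a_i$ to be a palindrome; it is exactly here that ``$a_k$ is not a suffix of $u_3$'' is used, to conclude $u_3''=\lambda$, after which a third decomposition exhibits the non-palindromic window $a_ia_ka_ja_i$. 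To repair your proof you would need to replace your $B_2$ by the finer blocks $a_j\mid u_3\mid a_k$ (so that $u_1,u_5$ leave the window) and then add the two-step argument for $a_i\in\ALPH(u_3)$; as written, the proposal does not establish the lemma.
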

\begin{proof}
For  $u_1, u_2, u_3, u_4, u_5 \in \Sigma^*$, consider $w= u_1 a_i u_2 a_j u_3 a_{k} u_4 a_i u_5$ where $a_j \neq a_k$, $a_j \neq a_i \neq a_k$ and  $a_k$ is not a suffix of $u_3$. We have the following cases:
\begin{itemize}
    \item  $a_i\notin \ALPH(u_3) : $  
     Then, $w'= u_5 u_4  a_i a_k u_3  a_j a_i u_2 u_1 \in \mathtt{BR}(w)$   and $a_j \neq a_k$ implies $a_i a_k u_3  a_j a_i$ is not a palindromic factor of $w'$. Then, by Theorem \ref{tglen}, $w'$ is not rich.
    \item $a_i\in \ALPH(u_3) : $  Then, let $u_3 = u_3' a_i u_3''$ such that  $u_3', u_3'' \in \Sigma^*$ and $|u_3''|_{a_i}=0$.
    Now, $w''= u_5 u_4 a_i a_{k} u_3'' a_i u_1 a_i u_2 a_j u_3' \in \mathtt{BR}(w)$. If $w''$ is not rich, then we are done. If $w''$ is rich, then $a_i a_{k} u_3'' a_i\in Fac(w'')$ and since, $|u_3''|_{a_i}=0$, by  Theorem \ref{tglen},  $a_i a_{k} u_3'' a_i$ is a palindrome. This implies $u_3''=\lambda$ as $a_k$ is not a suffix of $u_3$. Then, $w=u_1 a_i u_2 a_j u_3' a_i a_{k} u_4 a_i u_5$. Now, $w'''= u_4 a_i u_5  u_3' a_i a_{k}  a_j a_i u_2 u_1 \in \mathtt{BR}(w) $. Then, $a_i a_{k}  a_j a_i \in Fac(w''')$ is not a palindrome  as $a_j\neq a_k$. Therefore, by Theorem \ref{tglen}, $w'''$ is not rich.
\end{itemize}
\end{proof}

Now, we find a necessary and sufficient condition for a non-binary word such that all elements in its block reversal are rich.

\begin{theorem}
Let $w$ be a non-binary word. Then, all elements of $\mathtt{BR}(w)$ are rich iff $w$ is either of the form $a_1a_2a_3 \cdots a_k$ or $a_i^{|w|}$ where $a_i\in \Sigma$ are distinct.
\end{theorem}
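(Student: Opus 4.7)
The forward direction is immediate. If $w = a_j^{|w|}$ then $\mathtt{BR}(w)=\{w\}$ and $w$ has the $|w|$ palindromic factors $a_j,a_j^2,\ldots,a_j^{|w|}$, so it is rich. If $w=a_1a_2\cdots a_k$ with the $a_i$ pairwise distinct, then each element of $\mathtt{BR}(w)$ is a rearrangement of $w$, hence a word on $|w|$ pairwise distinct letters whose only palindromic factors are the $|w|$ singletons, so it is rich.

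For the converse I would argue by contraposition. Suppose $w$ is non-binary and of neither listed form. Non-binary excludes $|\ALPH(w)|=2$; exclusion of $w=a_j^{|w|}$ rules out $|\ALPH(w)|=1$; hence $|\ALPH(w)|\geq 3$. Exclusion of $w=a_1\cdots a_k$ with distinct letters forces some $a\in\ALPH(w)$ to occur at least twice in $w$. The plan is to exhibit a block decomposition $w=B_1B_2\cdots B_t$ whose reversal $B_t\cdots B_1$ matches the hypothesis of Lemma~\ref{nori} or Lemma~\ref{notrich}, thereby producing a non-rich element of $\mathtt{BR}(w)$.

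I would split on the run-length structure of $w$. In Case~1, some maximal run $a^n$ of $w$ has length $n\geq 2$: I would use the 2-block split placing this run at the left, giving $w^{*}=a^n y\in\mathtt{BR}(w)$ with $y$ non-empty (since $|\ALPH(w)|\geq 3$) and beginning in some letter $a_2\neq a$. Using a further letter $a_3\in\ALPH(y)\setminus\{a,a_2\}$, I would refine the block decomposition according to where letters of $\Sigma\setminus\{a,a_2\}$ first appear, placing $w^{*}$ in the form $a^{n}a_2 u v$ with $u\in\{a,a_2\}^{+}$ and $v\in(\Sigma\setminus\{a,a_2\})^{+}$ required by Lemma~\ref{nori}; if the letters of $\Sigma\setminus\{a,a_2\}$ are too intermingled with $\{a,a_2\}$-letters in $y$ for this form to be available, then the pattern $a_i\cdots a_j\cdots a_k\cdots a_i$ of Lemma~\ref{notrich} becomes present instead. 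In Case~2, every run of $w$ has length 1 and $a$ recurs non-consecutively: then $w$ contains a factor $a\alpha a$ with $\alpha$ non-empty and $a\notin\ALPH(\alpha)$, and using $|\ALPH(w)|\geq 3$ I would arrange, via a suitable refinement of the cyclic block split, that two distinct non-$a$ letters lie between a pair of $a$-occurrences, matching Lemma~\ref{notrich}.

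The main obstacle is one of bookkeeping: $\mathtt{BR}$ is not closed under composition, so any lemma applied to a conjugate of $w$ only places non-rich words inside $\mathtt{BR}$ of that conjugate. To push such an element back into $\mathtt{BR}(w)$ one must realise the three- or five-block decomposition used in Lemma~\ref{nori} or Lemma~\ref{notrich} as a single block decomposition of the original $w$. Since conjugates of $w$ lie in $\mathtt{BR}(w)$ via a 2-block split, the fix is to take a common refinement of that split with the lemma's inner blocks; verifying case-by-case that such a refinement indeed exists, and falling back to a direct application of Theorem~\ref{tglen} in the degenerate subcases, is the most technical portion of the argument.
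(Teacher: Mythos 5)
Your forward direction is fine (and more explicit than the paper's, which just calls it straightforward). The converse, however, has a genuine gap, and it is exactly the one you flag yourself and then defer: applying Lemma~\ref{nori} or Lemma~\ref{notrich} to a \emph{conjugate} of $w$ produces a non-rich element of $\mathtt{BR}$ of that conjugate, not of $\mathtt{BR}(w)$, and your proposed fix --- a common refinement of the two block decompositions --- does not work, because block reversal reverses the order of the blocks: composing two reversals scrambles the blocks into an order that is in general not realizable by any single decomposition of $w$ (e.g.\ $bac\in\mathtt{BR}(cab)$ and $cab\in\mathtt{BR}(abc)$, yet $bac\notin\mathtt{BR}(abc)$). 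The only safe composition principle available is Lemma~\ref{hhh}, $\mathtt{BR}(v)\mathtt{BR}(u)\subseteq\mathtt{BR}(uv)$, which helps only when the lemma's hypothesis is met by a genuine prefix or suffix of $w$, not by an arbitrary cyclic shift. The paper avoids the issue entirely by constructing explicit block decompositions of $w$ itself and applying Theorem~\ref{tglen} to the resulting words; Lemmas~\ref{nori} and~\ref{notrich} are invoked only after the structure of $w$ (not of a conjugate) has been pinned down.

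Beyond the composition problem, your case split itself leaves cases uncovered. In your Case~1, take $w=aabc$: the long run is already at the front, but Lemma~\ref{nori} needs $u\in\{a_1,a_2\}^+$ nonempty and Lemma~\ref{notrich} needs two letters strictly between two occurrences of the same letter, so neither applies, to $w$ or to any conjugate; the paper handles this by splitting the run itself, showing $a_j^m\gamma'\gamma a_j^m a_j^s\in\mathtt{BR}(w)$ forces $a_j^m\gamma'\gamma a_j^m$ to be a palindrome via Theorem~\ref{tglen}. In your Case~2, take $w=abcb$: every run has length $1$ and $b$ recurs, but cyclically the two occurrences of $b$ are separated by the singleton gaps $\{a\}$ and $\{c\}$, so no conjugate ever places two distinct non-$b$ letters between two $b$'s and Lemma~\ref{notrich} is never applicable; nevertheless $bcab\in\mathtt{BR}(w)$ is not rich. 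The paper catches this in its subcase $i\le 2$ by reducing $w$ to the forms $a_1^{n_1}a_2^{n_2}a_3^{n_3}a_2z'$ or $a_1^{n_1}a_2uv$ and exhibiting a non-rich element directly. So the ingredients you name are the right ones, but the reduction you sketch fails on concrete inputs, and the missing structural analysis is precisely the substance of the paper's proof.
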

\begin{proof}

Let $w \in \Sigma^*$. If $|\ALPH(w)|=1$, we are done. Assume $|\ALPH(w)|\geq 3$ and consider the run-length encoding of $w$ to be $ a_1^{n_1}a_2^{n_2}a_3^{n_3} \cdots a_k^{n_k}$ where $a_i \neq a_{i+1}\in \Sigma$, $k \geq 3$ and $n_i \geq 1$. Let all elements of $\mathtt{BR}(w)$ be rich. We have the following cases: 
\begin{itemize}
\item All $a_t$'s are distinct for $1\leq t\leq k$ : We prove that $n_t=1$ for $1\leq t\leq k$.
Assume if possible that there exists at least one $n_j \geq 2$ for some $j$, i.e., $n_j = 2m +s$ for $m\geq 1$ and $ s\in \{0,1\}$. Let $\gamma = a_1^{n_1} a_2^{n_2} \cdots a_{j-1}^{n_{j-1}}$ and $\gamma'=a_{j+1}^{n_{j+1}} a_{j+2}^{n_{j+2}} \cdots a_k^{n_k}$.
Then, $w =  \gamma a_j^{2m+s}\gamma'$.
Since, $a_j^m \gamma'  \gamma a_j^m a_j^s \in \mathtt{BR}(w)$  is  rich and all $a_t$'s are distinct, by Theorem \ref{tglen}, $u = a_j^m \gamma' \gamma a_j^m$ is a palindrome. Now, as $|\ALPH(w)|\geq 3$ and all $a_t$'s are distinct, $u$ is not a palindrome which is a contradiction. Therefore, $n_t = 1$ for  $1\leq t\leq k$ and $w=a_1a_2a_3 \cdots a_k$.
\item Otherwise, suppose  $i$ is the least index such that $|a_1a_2\cdots a_k|_{a_i}\geq 2$ and $a_j=a_i$ where $a_l\neq a_i$  for $i+1\leq l\leq j-1$ i.e., $j$ is the first position at which $a_i$ repeats for  $i<j$. We have the following cases :
\begin{itemize}
\item  $i\geq 3$ :  Note that $a_i \neq a_1$ and $a_i \neq a_2$. Let $n_i \geq n_j$ such that $n_i = n_j + s' $ where $s' \geq 0$. Now, for $\delta = a_3^{n_3} a_4^{n_4} \cdots a_{i-2}^{n_{i-2}}$, $\alpha = a_{i+1}^{n_{i+1}} a_{i+2}^{n_{i+2}} \cdots a_{j-1}^{n_{j-1}}$ and $\beta = a_{j+1}^{n_{j+1}} a_{j+2}^{n_{j+2}}\cdots a_{k}^{n_{k}}$, we have,
 $$w =  a_1^{n_1}a_2^{n_2} \delta a_{i-1}^{n_{i-1}} a_i^{n_j + s'} \alpha a_j^{n_j} \beta. $$
Since, $\beta a_j^{n_j} a_1^{n_1} a_2^{n_2} \delta a_{i-1}^{n_{i-1}} a_i^{n_j + s'} \alpha \in \mathtt{BR}(w)$ is rich, by Theorem \ref{tglen}, we get, $a_j^{n_j} a_1^{n_1} a_2^{n_2} \delta a_{i-1}^{n_{i-1}} a_i^{n_j}$ is a palindrome, and hence, $a_1 = a_{i-1}$. Similarly, as $\beta a_j^{n_j} a_2^{n_2} \delta a_{i-1}^{n_{i-1}} a_i^{n_j} a_i^{s'} \alpha a_1^{n_1} \in \mathtt{BR}(w)$, by Theorem \ref{tglen}, we have, $a_j^{n_j} a_2^{n_2} \delta a_{i-1}^{n_{i-1}} a_i^{n_j}$ is a palindrome, which gives $a_2 = a_{i-1}$. Thus, $a_1 = a_2$ which is a contradiction. A symmetrical argument holds for the case $n_i< n_j$.

\item  $i\leq 2$ : Let $i=1$, i.e., $a_1$ has a  repetition and there exists  an index $l>1$  such that $a_1 = a_l$. If all elements in $\mathtt{BR}(w)$ are rich, then by Lemma \ref{notrich}, there exists at most one distinct letter between $a_1$ and $a_l$. Similarly, between any two occurrences of $a_2$, there exists at most one distinct letter.  Then, $w$ can only be of forms $a_1^{n_1} a_2^{n_2} a_3^{n_3} a_2 z'$ or $ a_1^{n_1} a_2 u v$ where $a_1 \neq a_3$, $z' \in \{\Sigma \setminus \{a_1\}\}^*$, $u \in \{ a_1, a_2 \}^+$ and $v \in (\Sigma \setminus \{ a_1, a_2 \} )^+$. If $w$ is in form $a_1^{n_1} a_2^{n_2} a_3^{n_3} a_2 z'$, then $z'a_2 a_3^{n_3} a_1^{n_1} a_2^{n_2} \in \mathtt{BR}(w)$. Since, $a_2 a_3^{n_3} a_1^{n_1} a_2$ is not a palindrome, by Theorem \ref{tglen}, $z'a_2 a_3^{n_3} a_1^{n_1} a_2^{n_2} $ is not rich, a contradiction. Now, consider $w$ is in form $ a_1^{n_1} a_2 u v$.
Note that as $|\ALPH(w)|\geq 3,$ $v\neq \lambda$.
By Lemma \ref{nori}, there exists an element in $\mathtt{BR}(w)$ which is not rich, a contradiction.
 \end{itemize}
 \end{itemize}
Thus, if $|\ALPH(w)|\geq 3$ and  all elements of $\mathtt{BR}(w)$ are rich, then $w=a_1a_2a_3 \cdots a_k$ where each $a_i\in \Sigma$ are distinct.\\
The converse is  straightforward.
\end{proof}

\section{Block Reversal of Binary Words}\label{sec4}


Anisiu et al. (\cite{pcofw}) showed that any binary word of length greater than $8$, has at least $8$ non-empty palindromic factors. A set of  words that achieve the bound of having exactly $8$ palindromic factors was given by Fici et al. (\cite{lepin}). 
 We  recall the definition  of $k$-$th$ power of $u \in \Sigma^*$ from Brandenburg (\cite{FUPH}) as the prefix of least length  $u'$ of $u^n$ where $n\geq k$ such that $|u'|\geq k|u|$.  For example, given a word $aba$, the $\frac{5}{3}$-$th$ power of $aba$ is $ aba^{(\frac{5}{3})} = abaab$.  Fici et al. (\cite{lepin}) showed that for all $u\in C(v)$ where $v= abbaba$, $P(u^{(\frac{n}{6})})=8$, $n\geq 9$. Mahalingam et al. (\cite{lep})  characterized  words $w$ such that $P(w) =8$.  They proved that a binary word $w$ has $8$ palindromic factors iff $w$ is of the form $u^{(\frac{n}{6})}$ where  $u\in C(v) \cup  C(v^R)$ and $v=abbaba$.

In this section, we discuss the case  of binary words. Let $l(w)$ be the length of the run sequence of a binary word $w$. We prove that 
if $l(w)=2$, then all  elements of $\mathtt{BR}(w)$ are rich and if $l(w)\geq 9$, then there exists an element in $\mathtt{BR}(w)$ that is not  rich.  
Then, we study the block reversal of binary words with $3\leq l(w)\leq 9$. The results in this section also hold for complement words as we have considered unordered alphabet $\Sigma=\{a,b\}$.


\subsection{\textbf{Block reversal of binary words $\bf{w}$ with} $\bf{l(w)= 2}$ $\bf{\&}$ $\bf{l(w)\geq 9}$}
\vspace{.25cm}
Now, we discuss the block reversal of binary words $w$ with $l(w)=2$ $\&$ $l(w)\geq 9$.
We first recall the following from Guo et al. (\cite{guo}).
\begin{proposition} \cite{guo}\label{runlen}
Every binary word having a run sequence of length less than or equal to $4$ is rich.
\end{proposition}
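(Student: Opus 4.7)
The plan is to show richness by exhibiting exactly $|w|$ distinct non-empty palindromic factors of $w$; since Droubay et al.'s upper bound says that any word has at most $|w|$ such factors, achieving the bound forces $w$ to be rich. I would split on the value of $l(w)$ and, in each case, classify factors of $w$ by which runs contain their two endpoints.

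For $l(w)=1$ we have $w=a^{n}$, whose distinct non-empty palindromic factors are $a^{1},a^{2},\dots,a^{n}$, giving $n=|w|$. For $l(w)=2$, say $w=a^{n}b^{m}$, every factor is either $a^{i}$, $b^{j}$, or of the form $a^{i}b^{j}$ with $i,j\geq 1$; the last type is never a palindrome, so the palindromic factors are precisely $\{a^{i}:1\le i\le n\}\cup\{b^{j}:1\le j\le m\}$, contributing $n+m=|w|$.

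For $l(w)=3$, writing $w=a^{n}b^{m}a^{k}$, a palindromic factor either lives inside a single run (yielding $a^{i}$ for $1\leq i\leq\max\{n,k\}$ and $b^{j}$ for $1\leq j\leq m$), or has its endpoints in the two different $a$-runs, in which case it must contain the middle $b$-run \emph{in full} (since no $a$ occurs between those runs) and, being a palindrome, must take the form $a^{i}b^{m}a^{i}$ with $1\le i\le\min\{n,k\}$. Factors whose endpoints are in runs of different letters cannot be palindromes. The total count is
\[
\max\{n,k\}+m+\min\{n,k\}=n+m+k=|w|.
\]
For $l(w)=4$, $w=a^{n}b^{m}a^{k}b^{l}$, the same endpoint-classification argument produces four families: $a^{i}$ for $1\le i\le\max\{n,k\}$, $b^{j}$ for $1\le j\le\max\{m,l\}$, $a^{i}b^{m}a^{i}$ for $1\le i\le\min\{n,k\}$, and the new symmetric family $b^{j}a^{k}b^{j}$ for $1\le j\le\min\{m,l\}$. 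Summing gives $(n+k)+(m+l)=|w|$. The mirror cases in which $w$ begins with $b$ are handled by the same argument after swapping the roles of $a$ and $b$.

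The step requiring the most care is verifying completeness and pairwise distinctness of the enumeration: I must check that no palindromic factor has been missed (which rests on the facts that a palindrome's first and last letters coincide, and that whenever a factor's endpoints lie in two runs of the same letter separated by a run of the other letter, that intermediate run must be traversed in full), and that the listed families are disjoint (which follows by comparing first letters, lengths, or letter content). Once these two routine but essential book-keeping checks are done, the equality of the count with $|w|$ is immediate from the run-sequence identity $\sum n_i=|w|$, and the conclusion follows.
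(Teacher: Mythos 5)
Your proof is correct. Note that the paper does not prove this proposition at all: it is imported verbatim from Guo, Shallit and Shur, whose result is a full characterization of rich binary words in terms of run-length encodings, obtained with considerably more machinery (properties of complete returns and palindromic closures). What you give instead is a direct, self-contained counting argument for the special case $l(w)\leq 4$: you enumerate the palindromic factors explicitly and show the count equals $|w|$, then invoke the Droubay--Justin--Pirillo upper bound. The argument is sound. The completeness of your enumeration rests on exactly the three facts you flag: a non-empty palindrome has equal first and last letters; a factor whose endpoints lie in runs of different letters is therefore never a palindrome; and when the endpoints lie in two distinct runs of the same letter, the single intervening run of the other letter must be contained in full, which pins the factor down to the form $x^{i}y^{m}x^{i'}$ and palindromicity forces $i=i'$. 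The place where the hypothesis $l(w)\leq 4$ is genuinely used is that any two runs of the same letter are separated by exactly \emph{one} run of the other letter; already at $l(w)=5$ a palindromic factor such as $a^{i}b^{m}a^{k}b^{l}a^{i}$ can straddle two intervening runs and the clean count $\max+\min$ per letter breaks down, so your argument correctly does not overreach. The families you list are pairwise disjoint (distinguished by first letter, presence of the other letter, and length), so the total $\bigl(\max\{n,k\}+\min\{n,k\}\bigr)+\bigl(\max\{m,l\}+\min\{m,l\}\bigr)=|w|$ is exact. Your elementary route buys independence from the cited reference at the cost of not yielding the stronger characterization that Guo et al. actually prove; for the purposes of this paper, which only ever uses the $l(w)\leq 4$ case, your proof suffices.
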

It was verified by Anisiu et al.  (\cite{pcofw}) that for all  short binary words (up to $|w|=7$), $P(w)=|w|$.
We observe that for words $w$ with $|w|> 7$, some elements of $\mathtt{BR}(w)$ may not be rich even when  $w$ is rich.
For example, $w = a^2b^3a^3$  is rich but $a^2bab^2a^2 \in \mathtt{BR}(w)$, is not rich. We discuss the case when $l(w)=2$ for a word $w$ in the following.

\begin{proposition}\label{u1}
If $w = a^{n_1}b^{n_2}$ where $n_1, n_2 \geq 1$, then all  elements of $\mathtt{BR}(w)$ are rich.
\end{proposition}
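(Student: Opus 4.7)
The plan is to exploit the very restricted run structure of $w = a^{n_1} b^{n_2}$: since $w$ has only one position where the letter changes (between the $a$-run and the $b$-run), any decomposition of $w$ into blocks can contain \emph{at most one} block that straddles this boundary, while every other block is a pure power of $a$ or a pure power of $b$. I would classify the elements $w' \in \mathtt{BR}(w)$ according to whether such a straddling block exists.

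In the first case, no block contains both letters, so the decomposition has the form
\[
 w = a^{k_1} a^{k_2} \cdots a^{k_p}\, b^{l_1} b^{l_2} \cdots b^{l_q}
\]
with $\sum k_r = n_1$ and $\sum l_r = n_2$. Reversing the block order collapses the consecutive $b$-blocks and the consecutive $a$-blocks, yielding $w' = b^{n_2} a^{n_1}$, whose run sequence has length $2$. In the second case, exactly one block $B_s = a^{i} b^{j}$ with $1 \le i \le n_1$ and $1 \le j \le n_2$ straddles the boundary. All blocks before $B_s$ are pure $a$-blocks (total length $n_1 - i$) and all blocks after $B_s$ are pure $b$-blocks (total length $n_2 - j$). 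Reversing the block order and collapsing adjacent identical letters then gives
\[
 w' = b^{n_2 - j}\, a^{i}\, b^{j}\, a^{n_1 - i},
\]
whose run sequence has length at most $4$ (it can drop to $3$ when $i = n_1$ or $j = n_2$, and to $2$ when both hold, recovering $w$ itself).

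Having shown that every element of $\mathtt{BR}(w)$ is a binary word with run sequence of length at most $4$, I would conclude by directly invoking Proposition~\ref{runlen} of Guo et al., which guarantees that all such binary words are rich. The argument is essentially a book-keeping exercise, so the only conceivable obstacle is making sure the case analysis is exhaustive, in particular verifying that every possible $w' \in \mathtt{BR}(w)$ really does fall into one of the two forms above; this is handled by the observation that $w$ has exactly one letter-change position, which limits the straddling block count to at most one.
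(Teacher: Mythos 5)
Your proof is correct and follows essentially the same route as the paper: both arguments reduce to observing that every element of $\mathtt{BR}(w)$ has the form $b^{n_2'}a^{n_1'}b^{n_2-n_2'}a^{n_1-n_1'}$, hence has run sequence of length at most $4$, and then invoke Proposition~\ref{runlen}. Your case split on whether a block straddles the unique letter-change position simply supplies the justification for this description of $\mathtt{BR}(w)$, which the paper states without elaboration.
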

\begin{proof}
Let $w=a^{n_1}b^{n_2}$ where $n_1, n_2 \geq 1$. Then, 
$$\mathtt{BR}(w) = \{b^{n_2'}a^{n_1'}b^{n_2-n_2'}a^{n_1-n_1'}\; : \;0 \leq n_1' \leq n_1, \; 0 \leq n_2' \leq n_2\}.$$
Since the length of the run sequence of each element of $\mathtt{BR}(w)$ is less than or equal to $4$, by Proposition \ref{runlen}, all  elements of $\mathtt{BR}(w)$ are rich.
\end{proof}

We now prove that for a binary word $w$ with length of the run sequence greater than  $8$, there exists an element in $\mathtt{BR}(w)$ that is not rich.
We recall the following from Mahalingam et al. (\cite{blore}).
\begin{lemma}\label{hhh}\cite{blore}
$\mathtt{BR}(v) \mathtt{BR}(u) \subseteq \mathtt{BR}(uv)$ for $u,\; v\in \Sigma^*$.
\end{lemma}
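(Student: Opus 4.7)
The plan is to unfold Definition \ref{br} on both sides and exhibit, for any element of the left-hand side, a single block decomposition of $uv$ whose reversal produces that element. First I would fix an arbitrary $w \in \mathtt{BR}(v)\mathtt{BR}(u)$, which by construction factors as $w = v' u'$ with $v' \in \mathtt{BR}(v)$ and $u' \in \mathtt{BR}(u)$. By Definition \ref{br}, there exist decompositions
\[
u = U_1 U_2 \cdots U_s, \qquad v = V_1 V_2 \cdots V_r,
\]
with each $U_i, V_j \in \Sigma^+$, such that $u' = U_s U_{s-1} \cdots U_1$ and $v' = V_r V_{r-1} \cdots V_1$.

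Next, I would observe that the juxtaposition
\[
uv = U_1 U_2 \cdots U_s V_1 V_2 \cdots V_r
\]
is itself a valid block decomposition of $uv$ in the sense of Definition \ref{br}, since every $U_i$ and $V_j$ lies in $\Sigma^+$. Reversing the order of the blocks in this particular decomposition of $uv$ produces
\[
V_r V_{r-1} \cdots V_1 U_s U_{s-1} \cdots U_1 = v' u' = w,
\]
which exhibits $w$ as an element of $\mathtt{BR}(uv)$. Since $w$ was arbitrary, this yields the claimed containment.

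There is no real obstacle here: the argument is a direct manipulation of the definition. The only subtlety worth flagging is to make sure the block decomposition concatenation remains \emph{valid}, i.e., that none of the $U_i$ or $V_j$ is empty and that the resulting sequence of blocks still lists $uv$ from left to right; both are immediate from the hypotheses. I would not expect to need any auxiliary lemma beyond Definition \ref{br}.
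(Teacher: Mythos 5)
Your proof is correct and is the standard definitional argument; the paper itself only recalls this lemma from Mahalingam et al.\ without reproving it, and your step of concatenating the two block decompositions of $u$ and $v$ into one decomposition of $uv$ and then reversing the blocks is exactly what that proof amounts to. The only case your write-up silently skips is $u=\lambda$ or $v=\lambda$ (permitted since $u,v\in\Sigma^*$, while Definition \ref{br} is stated for nonempty words), where the containment holds trivially once one adopts the convention $\mathtt{BR}(\lambda)=\{\lambda\}$.
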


We now have the following:

\begin{proposition}\label{u3}
Let $w \in \{a, b\}^*$ such that $l(w)\geq 9$. Then, there exists an element in  $\mathtt{BR}(w)$ that is not rich.
\end{proposition}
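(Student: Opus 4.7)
The plan is to prove Proposition \ref{u3} by first reducing, via Lemma \ref{hhh}, to the base case $l(w)=9$, and then exhibiting an explicit block decomposition of $w$ whose reversal is not rich.

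For the \emph{reduction}, suppose $l(w)>9$ and write $w=xy$, where $y$ is the suffix of $w$ consisting of its last nine runs, so $l(y)=9$. By Lemma \ref{hhh}, $\mathtt{BR}(y)\cdot \mathtt{BR}(x) \subseteq \mathtt{BR}(w)$. If $y'\in \mathtt{BR}(y)$ is not rich, then $y'x \in \mathtt{BR}(w)$ contains $y'$ as a prefix and is therefore not rich by Lemma \ref{rich}. Hence it suffices to handle $l(w)=9$.

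For the \emph{base case}, assume without loss of generality that $w$ begins with $a$; the other case follows by complementation, which commutes with $\mathtt{BR}$ and preserves richness. Write $w=a^{n_1}b^{n_2}a^{n_3}b^{n_4}a^{n_5}b^{n_6}a^{n_7}b^{n_8}a^{n_9}$ with each $n_i\geq 1$. The key fact I would use is that the length-nine binary word $aabbabaab$ has only eight distinct palindromic factors: indeed, $aabbabaab=u^{(9/6)}$ where $u=aabbab$ is a conjugate of $abbaba$, so by the characterization of binary words with $P=8$ due to Mahalingam et al.~\cite{lep}, $P(aabbabaab)=8<9$, and hence $aabbabaab$ is not rich. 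By Lemma \ref{rich}, any word containing $aabbabaab$ as a factor is also not rich.

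The \emph{construction} then produces, for every admissible tuple $(n_1,\ldots,n_9)$, a block decomposition of $w$ whose reversal contains $aabbabaab$ (or a variant such as one of its complements, reverses, or cyclic conjugates, which are likewise length-nine non-rich binary words) as a factor. In the minimal subcase $n_1=\cdots=n_9=1$ where $w=ababababa$, the decomposition $(ab,a,b,a,b,ab,a)$ already reverses to precisely $aabbabaab$. For general $n_i$, the excess letters in runs of length greater than one are absorbed into suitably chosen neighbouring blocks, while preserving the non-rich factor in the reversal.

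The \emph{main obstacle} will be the case analysis over $(n_1,\ldots,n_9)$: one must verify in each subcase both the validity of the proposed block decomposition (every block non-empty) and the presence of a known short non-rich binary word as a factor of the reversal. The nine available runs provide enough structural flexibility to guarantee that such a construction exists in every case, but the bookkeeping is intricate, especially when several $n_i$ take the boundary value one and the decomposition must be chosen carefully to avoid producing only rich reversals of the kind encountered for words like $(ab)^n$, whose conjugates are all rich.
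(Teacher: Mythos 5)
Your reduction to the case $l(w)=9$ via Lemma \ref{hhh} is sound and is essentially the same step the paper takes (the paper works with the first nine runs as a prefix rather than the last nine as a suffix, but the argument is identical). The problem is the base case: what you give there is a plan, not a proof. You verify a single instance, $n_1=\cdots=n_9=1$, where the decomposition $(ab,a,b,a,b,ab,a)$ of $ababababa$ reverses to $aabbabaab$, which indeed contains the non-rich word $aabbabaa$ and is therefore not rich. For all other tuples $(n_1,\ldots,n_9)$ you assert that ``excess letters \ldots are absorbed into suitably chosen neighbouring blocks, while preserving the non-rich factor in the reversal,'' and you yourself flag the resulting case analysis as the main obstacle. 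That case analysis \emph{is} the content of the proposition; without it nothing has been proved. There is also a substantive worry with the strategy itself: you aim to realize one of a fixed finite list of words (conjugates, reverses and complements of $aabbabaab$) as an \emph{exact} factor of some block reversal, but when several $n_i$ are large the rearranged runs need not produce the precise run-length pattern $(2,2,1,1,2,1)$ anywhere, and the non-rich witness one actually obtains (e.g.\ $a^2b^{n_8+n_6}aba^2$ with $n_8+n_6\ge 3$, or $a^2b^3aba^2$) lies outside your list. So either the list must be enlarged to all minimal non-rich binary words that can arise, or the criterion must be changed --- and in either case the general construction still has to be exhibited.

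For comparison, the paper avoids exact-factor matching entirely. It fixes the nine-run prefix $w'=a^{n_1}b^{n_2}\cdots a^{n_9}$, assumes every element of $\mathtt{BR}(w')$ is rich, and writes down five explicit elements $w_1,\ldots,w_5$ of $\mathtt{BR}(w')$ whose shapes are valid for \emph{all} values of the $n_i$. Applying Theorem \ref{tglen} (a complete return to a palindrome inside a rich word must itself be a palindrome) to factors such as $a^2b^{n_8+n_6}aba^2$ in $w_1$, $a^2bab^{n_6-1+n_4}a^2$ in $w_2$, and $b^2a^{n_7+n_5}bab^2$ in $w_3$ successively forces $n_3=n_5=1$, $n_4=n_6=1$ and $n_2=1$; the elements $w_4$ and $w_5$ then force $n_8=2$ and $n_8=1$ simultaneously, a contradiction. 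If you want to salvage your approach, you would need to supply, for every admissible tuple, a concrete decomposition together with a verification that its reversal contains some member of a complete list of minimal non-rich factors --- at which point you will have reconstructed an argument of at least the same length as the paper's, so adopting the complete-return criterion directly is the more economical route.
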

\begin{proof}
 Let $w \in \{a, b\}^*$ such that $l(w) \geq 9$. Since, $l(w) \geq 9$, then for $n_i\geq 1$, consider  $w' = a^{n_1} b^{n_2} a^{n_3} b^{n_4} a^{n_5} b^{n_6}\\ a^{n_7} b^{n_8} a^{n_9}$ to be a prefix of $w$. If $w$ is not rich, then we are done. Otherwise, $w$ is rich, then by Lemma \ref{rich}, all  factors of $w$ are  rich.  Also, from Lemma \ref{hhh}, we have, $\mathtt{BR}(v) \mathtt{BR}(u) \subseteq {\mathtt{BR}(uv)}$ for $u,\; v\in \Sigma^*$. We show that there exists an element in  $\mathtt{BR}(w')$ that is not  rich to complete the proof. Suppose to the contrary that all  elements of $\mathtt{BR}(w')$ are rich. Let $w_1, w_2, w_3 \in \mathtt{BR}(w')$ where 
 $$w_1 = a^{n_9+n_7} b^{n_8+n_6} a b a^{n_5-1+n_3}b^{n_4-1+n_2}a^{n_1},$$
 $$w_2 = b^{n_8} a^{n_9+n_7} b a b^{n_6-1+n_4} a^{n_5-1+n_3+n_1}b^{n_2} \text{ \;\;and}$$
 $$ w_3 = a^{n_9} b^{n_8+n_6} a^{n_7+n_5} b a b^{n_4-1+n_2} a^{n_3-1+n_1}.$$ We have the following:
 \begin{itemize}
     \item  If  $n_3+n_5\geq 3$, then $a^2 b^{n_8+n_6} a b a^2$ is a factor of $w_1$ that contains exactly two occurrences of a palindrome $a^2$ as a prefix and as a suffix. By Theorem \ref{tglen}, if $w_1$ is rich, then $a^2 b^{n_8+n_6} a b a^2$ is a palindrome which is a contradiction. Hence, $n_3 = n_5 =1$.
     \item  If  $n_4+n_6\geq 3$, then $a^2 b a b^{n_6-1+n_4} a^2$ is a factor of $w_2$ that contains exactly two occurrences of a palindrome $a^2$ as a prefix and as a suffix. By Theorem \ref{tglen}, if $w_2$ is rich, then $ a^2 bab^{n_6-1+n_4} a^2$ is a palindrome which is a contradiction. Hence,  $n_4 =n_6=1$.
     \item If $n_2 \geq 2$, then 
     $b^2 a^{n_7+n_5} b a b^2$
      is a factor of $w_3$ that contains exactly two occurrences of a palindrome $b^2$ as a prefix and as a suffix. By Theorem \ref{tglen}, if $w_3$ is rich, then $ b^2 a^{n_5+n_7} b a b^2$ is a palindrome which is a contradiction. Hence,  $n_2=1$.
     
 \end{itemize}
 Hence, we have $n_2=n_3=n_4=n_5=n_6=1$ and  $w' = a^{n_1} b a b a b a^{n_7} b^{n_8} a^{n_9}$.
 Let $w_4,\; w_5\in \mathtt{BR}(w')$ where $w_4= a^{n_9+n_7} b^{n_8} a b^2 a^{n_1+1} b$ and $w_5 = a^{n_9+n_7} b^{n_8+1} a b^2 a^{1+n_1}$.  Now,
   $a^2 b^{n_8}a b^2 a^2$
      is a factor of $w_4$ that contains exactly two occurrences of a palindrome $a^2$ as a prefix and as a suffix. By Theorem \ref{tglen}, since $w_4$ is rich,  $n_8=2$. Also,   $a^2 b^{n_8+1}a b^2 a^2$
      is a factor of $w_5$ that contains exactly two occurrences of a palindrome $a^2$ as a prefix and as a suffix. By Theorem \ref{tglen}, since $w_5$ is rich, $n_8=1$, which is a contradiction. Thus, there always exists an element in $\mathtt{BR}(w')$ that is not rich. 
\end{proof}

\subsection{\bf{Block reversal of binary words $\bf{w}$ with} $\bf{3\leq l(w)\leq 8}$}
\vspace{.25cm}
We now consider the case  for a binary word $w$ such that $3\leq l(w)\leq 8$. We observe that the result varies with the structure of the word. We compile all results towards the end of this section. We first recall the following from Anisiu et al. (\cite{pcofw}).
\begin{theorem}\cite{pcofw}\label{7ric} If $w$ is a binary word of length less than $8$, then $P(w)=|w|$. If $w$ is a binary word of length $8$, then $7\leq P(w)\leq 8$ and  $P(w)= 7$ iff $w$ is of the form $aabbabaa$ or $aababbaa$. 
\end{theorem}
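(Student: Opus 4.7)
The plan is to reduce the problem to counting \emph{bad returns} and then enumerating these over the binary alphabet. Call a word $u$ a \emph{bad return to the palindrome $p$} if $u$ begins and ends with $p$, the only two occurrences of $p$ in $u$ are the prefix and the suffix, and $u$ itself is not a palindrome. By Theorem~\ref{tglen} and Lemma~\ref{rich}, a word $w$ fails to be rich if and only if $w$ contains a bad return as a factor. Thus the first assertion amounts to showing that no binary word of length $\le 7$ contains a bad return, and the characterization in the second assertion amounts to identifying the length-$8$ binary words that are themselves bad returns.

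The main work is to pin down the shortest binary bad return, stratified by $|p|$. For $|p|=1$, say $p=a$, a bad return must have the form $ab^ka$, which is always a palindrome, so no bad return exists. For $|p|=2$, I may assume $p=aa$; a bad return then has the form $u=aa\,\alpha\,aa$, where $\alpha$ starts and ends with $b$ and contains no factor $aa$. A short enumeration for $|\alpha|\le 3$ shows every candidate $u$ is a palindrome, and for $|\alpha|=4$ exactly the two choices $\alpha\in\{babb,bbab\}$ produce non-palindromic $u$, giving $u\in\{aababbaa,\,aabbabaa\}$ of length $8$. For $|p|\ge 3$, the ``exactly two occurrences of $p$'' condition is very restrictive: treating the four palindromic cases $p\in\{aaa,aba,bab,bbb\}$ together with the required flanking letters shows that each bad return has length at least $10$. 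For $|p|\ge 4$, the bound $|u|\ge 2|p|\ge 8$ combined with the observation that any $u$ with $|u|=2|p|$ is of the form $pp$, forcing $u$ itself to be a palindrome whenever $p$ is, gives $|u|\ge 9$.

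Granting this enumeration, the three conclusions follow quickly. No binary word of length $\le 7$ can contain a bad return, so by Theorem~\ref{tglen} every such word is rich, i.e.\ $P(w)=|w|$. For $|w|=8$, the upper bound $P(w)\le|w|$ is standard (Droubay et al.~\cite{epi}), and the lower bound $P(w)\ge 7$ follows from the recursive identity $P(w)=P(w[1..7])+\varepsilon$ with $\varepsilon\in\{0,1\}$ (adding one letter adds at most one new palindromic factor, namely the longest palindromic suffix when it is unioccurrent), combined with $P(w[1..7])=7$ from the first part. Finally, $P(w)=7$ iff the longest palindromic suffix of $w$ is not unioccurrent in $w$, iff $w$ itself is a bad return (any proper bad return would live in a factor of length $\le 7$, contradicting the first part), iff $w\in\{aababbaa,\,aabbabaa\}$ by the enumeration.

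The main obstacle is the case analysis for $|p|\ge 3$ in the second paragraph: one has to track both the ``exactly two occurrences of $p$'' condition and the non-palindromicity of $u$ across all four short palindromes, and while each individual sub-case is only a few lines, keeping the bookkeeping transparent and exhaustive is the delicate part of the proof. Everything else follows cleanly from Theorem~\ref{tglen}, Lemma~\ref{rich}, and the standard unioccurrence characterization of richness.
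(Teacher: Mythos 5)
The paper does not actually prove this statement: it is imported verbatim from Anisiu et al.\ \cite{pcofw}, so there is no in-paper argument to compare against, and you are supplying a from-scratch proof. Your overall strategy is sound: the reduction to what you call bad returns is exactly the contrapositive of Theorem~\ref{tglen}; the increment formula $P(w)=P(w[1..7])+\varepsilon$ with $\varepsilon\in\{0,1\}$ is the standard Droubay--Justin--Pirillo fact and gives $P(w)\ge 7$ for $|w|=8$; the observation that a non-rich word of length $8$ must itself be a bad return (its proper factors have length at most $7$, hence are rich by the first part and contain no bad return) is correct; and your $|p|\le 2$ enumeration is right, yielding exactly $aababbaa$ and $aabbabaa$ (up to complementation) at length $8$.

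However, two of your intermediate claims for $|p|\ge 3$ are false as stated. First, bad returns to a length-$3$ palindrome do not all have length at least $10$: the word $u=abaabbaba$ has exactly two occurrences of $aba$, namely as prefix and suffix, is not a palindrome, and has length $9$ (one checks $P(u)=8<9$). So a correct case analysis cannot deliver the bound you announce; fortunately only the bound $\ge 9$ is needed for the theorem, and that does hold. Second, for $|p|\ge 4$ the inequality $|u|\ge 2|p|$ is simply wrong, because the prefix and suffix occurrences of $p$ may overlap: $p=abba$ occurs as prefix and suffix of $u=abbabba$, of length $7<2|p|$. The clean repair for both defects is the standard lemma that if a palindrome $p$ is both a prefix and a suffix of $u$ with $|u|\le 2|p|$, then $u$ is a palindrome: writing $u=qp=ps$ with $|q|=|s|=|u|-|p|\le |p|$, palindromicity of $p$ forces $s=q^R$, hence $u^R=(qp)^R=p\,q^R=ps=u$. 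This disposes of all of $|p|\ge 4$ with $|u|\le 8$ at once, and reduces the remaining hand enumeration to $|p|=1$ with $|u|\ge 3$, $|p|=2$ with $5\le|u|\le 8$, and $|p|=3$ with $7\le|u|\le 8$, where your conclusions are correct. With those two claims corrected, the proof goes through.
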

Now, with the help of examples, we  illustrate that all  elements of $\mathtt{BR}(w)$ may  be rich for a binary word $w$ such that $3\leq l(w)\leq 8$.

\begin{example}\label{u2}
  For $3\leq l(w)\leq 8$, consider \[w=\left \{\begin{array}{cc}
   a(ba)^i  &  \text{for $i=\frac{l(w)-1}{2}$ and $l(w)$ odd,}\\
    (ab)^i & \text{for $i=\frac{l(w)}{2}$ and $l(w)$ even.}\end{array} \right.\]
    It can be observed that all  elements of $\mathtt{BR}(w)$ are rich.

\end{example}

Thus, from Propositions \ref{u1} and \ref{u3} and Example \ref{u2}, we conclude the following. 
\begin{theorem}
Let $w$ be a binary word and $l(w)$ be the length of the run sequence of $w$. If all  elements of  $\mathtt{BR}(w)$ are rich, then   $2\leq l(w)\leq 8$.
\end{theorem}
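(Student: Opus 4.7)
The plan is to derive the theorem as a direct synthesis of the three results already proved in this subsection. The substantive content is the upper bound $l(w)\leq 8$, which is simply the contrapositive of Proposition~\ref{u3}: that proposition asserts that any binary $w$ with $l(w)\geq 9$ necessarily admits a non-rich element of $\mathtt{BR}(w)$, so hypothesizing every element of $\mathtt{BR}(w)$ to be rich immediately forces $l(w)\leq 8$. The lower bound $l(w)\geq 2$ is implicit in the standing convention that the word is genuinely binary over $\Sigma=\{a,b\}$, i.e.\ that both letters actually occur; otherwise $w$ is a power of a single letter, $\mathtt{BR}(w)=\{w\}$ is trivially rich, and the situation falls outside the scope of the subsection.

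I would first argue the non-trivial direction by contraposition: assume every element of $\mathtt{BR}(w)$ is rich but $l(w)\geq 9$; then by Proposition~\ref{u3} some element of $\mathtt{BR}(w)$ fails to be rich, a contradiction. Hence $l(w)\leq 8$, and together with the standing assumption $l(w)\geq 2$ the claimed range follows. To confirm that the range $\{2,3,\dots,8\}$ is not vacuous, Proposition~\ref{u1} realizes $l(w)=2$ via every word $w=a^{n_1}b^{n_2}$, while Example~\ref{u2} supplies the family $a(ba)^{i}$ (odd $l(w)$) and $(ab)^{i}$ (even $l(w)$) producing each intermediate value of $l(w)$ with an entirely rich block reversal, so no integer in $\{2,3,\dots,8\}$ can be excluded a priori.

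There is no genuine obstacle to the present theorem itself, because all of the hard combinatorial work has already been packaged inside Proposition~\ref{u3}. That proof singles out the prefix $a^{n_1}b^{n_2}\cdots a^{n_9}$ of $w$, applies Theorem~\ref{tglen} to three specifically constructed block reversal elements $w_1,w_2,w_3$ to collapse $n_2=n_3=n_4=n_5=n_6=1$, and then extracts the contradictory constraints $n_8=2$ versus $n_8=1$ from two further elements $w_4,w_5$. The theorem we are asked to prove is therefore a one-line consequence of that work together with the realizability statements in Proposition~\ref{u1} and Example~\ref{u2}.
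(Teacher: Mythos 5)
Your proposal is correct and matches the paper's own argument, which likewise obtains the theorem as an immediate consequence of Proposition~\ref{u3} (the contrapositive giving $l(w)\leq 8$), with Proposition~\ref{u1} and Example~\ref{u2} cited only to show the range is actually realized. Your remark that $l(w)\geq 2$ is just the standing convention that both letters occur is the same implicit reading the paper relies on.
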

We now consider the following example of a binary word $v$ with $3\leq l(v)\leq 8$ such that 
 there exists an element in  $\mathtt{BR}(v)$ that is not rich.
\begin{example}\label{u4}
   For $3\leq l(v)\leq 8$, consider \[v=\left \{\begin{array}{cc}
   a^2b^3a^3(ba)^i  &  \text{for $i=\frac{l(v)-3}{2}$ and $l(v)$ odd,}\\
   a^2b^3a^3(ba)^ib  & \text{for $i=\frac{l(v)-4}{2}$ and $l(v)$ even.}\end{array} \right.\]

 and
    \[v'=\left \{\begin{array}{cc}
  (ba)^i a^2 b^2 a b a^2  &  \text{for $i=\frac{l(v)-3}{2}$ and $l(v)$ odd,}\\
  (ba)^i b a^2 b^2 a b a^2  & \text{for $i=\frac{l(v)-4}{2}$ and $l(v)$ even.}
  \end{array} \right.\] 
  
  It can observed that
  $v' \in \mathtt{BR}(v)$. Note that  by Lemma \ref{rich}, $v'$ is not rich as $a^2 b^2 a b a^2$ is not rich. 
\end{example}
We conclude from Examples \ref{u2} and \ref{u4} that  all  elements of $\mathtt{BR}(w)$ may or may not be rich for a binary word $w$ such that $3\leq l(w)\leq 8$. Now, we find the structure of binary words $w$ with $3\leq l(w)\leq 8$ such that the block reversal of $w$ contains only rich words. 


We recall the following from Mahalingam et al. (\cite{blore}).
\begin{lemma}\label{f1}\cite{blore}
Let $w\in \Sigma^+$,  $(\mathtt{BR}(w))^R = \mathtt{BR}(w^R)$.
\end{lemma}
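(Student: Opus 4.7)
The plan is to prove both inclusions $(\mathtt{BR}(w))^R \subseteq \mathtt{BR}(w^R)$ and $\mathtt{BR}(w^R) \subseteq (\mathtt{BR}(w))^R$ by exhibiting an explicit bijection between factorizations of $w$ and factorizations of $w^R$. The key elementary observation driving everything is: if $w = B_1 B_2 \cdots B_t$ with each $B_i \in \Sigma^+$, then $w^R = B_t^R B_{t-1}^R \cdots B_1^R$ is itself a valid factorization of $w^R$ into non-empty blocks, and conversely any factorization of $w^R$ arises in this way.

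For the forward inclusion, I would take an arbitrary $u \in \mathtt{BR}(w)$ coming from the factorization $w = B_1 B_2 \cdots B_t$, so that $u = B_t B_{t-1} \cdots B_1$ by Definition \ref{br}. Using the standard identity $(xy)^R = y^R x^R$ repeatedly, compute
\[
u^R = (B_t B_{t-1} \cdots B_1)^R = B_1^R B_2^R \cdots B_t^R.
\]
Then, taking the factorization $w^R = B_t^R B_{t-1}^R \cdots B_1^R$ (with blocks $C_i := B_{t-i+1}^R$ for $1 \le i \le t$) and applying Definition \ref{br} to $w^R$ gives the element $C_t C_{t-1} \cdots C_1 = B_1^R B_2^R \cdots B_t^R = u^R$, so $u^R \in \mathtt{BR}(w^R)$, i.e.\ $u \in (\mathtt{BR}(w^R))^R$.

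For the reverse inclusion I would apply the exact same argument with $w$ and $w^R$ swapped, using $(w^R)^R = w$: any $v \in \mathtt{BR}(w^R)$ arises from a factorization $w^R = D_1 D_2 \cdots D_s$ and equals $D_s D_{s-1} \cdots D_1$; reversing produces $v^R = D_1^R \cdots D_s^R$, and since $w = D_s^R D_{s-1}^R \cdots D_1^R$ is a valid factorization of $w$, we have $v^R \in \mathtt{BR}(w)$, hence $v \in (\mathtt{BR}(w))^R$.

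There is essentially no obstacle here — the result is a pure symmetry statement, and the only thing to be careful about is the index bookkeeping when translating between the two factorizations. It may read cleaner to phrase the whole argument as a single bijection $\Phi$ sending the factorization $(B_1, \ldots, B_t)$ of $w$ to the factorization $(B_t^R, \ldots, B_1^R)$ of $w^R$, and then verifying that under $\Phi$ the corresponding block-reversed words are reverses of each other; both inclusions then follow from the fact that $\Phi$ is clearly invertible (its inverse applies the same rule using $(w^R)^R = w$).
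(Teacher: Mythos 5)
Your proof is correct: the bijection $(B_1,\ldots,B_t)\mapsto(B_t^R,\ldots,B_1^R)$ between factorizations of $w$ and of $w^R$, together with $(xy)^R=y^Rx^R$ and $(w^R)^R=w$, gives both inclusions exactly as you describe. The paper itself does not prove this lemma (it is quoted from the cited reference \cite{blore}), but your argument is the standard and essentially only natural one, and the index bookkeeping checks out.
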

 
 We conclude the following from Lemmas \ref{rich} and \ref{f1}.
 
 \begin{remark}\label{o1}
 All  elements of $\mathtt{BR}(w)$ are rich iff all  elements of $\mathtt{BR}(w^R)$ are rich.
 \end{remark}
 We first study the case when the length of the run sequence of the word is equal to $8$.
 
\begin{proposition}\label{g8}
 Let $w\in \{a, b\}^*$ and $l(w)=8$. Then, all  elements of $\mathtt{BR}(w)$ are  rich iff $w=abababab$.
\end{proposition}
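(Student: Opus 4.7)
The proof has two directions. For the converse ($w=abababab$ implies every element of $\mathtt{BR}(w)$ is rich) this is the $l(w)=8$ instance of Example \ref{u2}: there $i=4$ and $w=(ab)^4=abababab$, so the example directly asserts that every block reversal is rich.

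For the forward direction I will suppose that every element of $\mathtt{BR}(w)$ is rich and write $w=a^{n_1}b^{n_2}a^{n_3}b^{n_4}a^{n_5}b^{n_6}a^{n_7}b^{n_8}$ (WLOG starting with $a$, by complementation, since the alphabet $\{a,b\}$ is unordered). By Remark \ref{o1} the hypothesis is invariant under $w\leftrightarrow w^R$, which swaps $n_i$ with $n_{9-i}$, so I may freely use that symmetry to halve the case analysis. The objective is to force $n_1=n_2=\cdots=n_8=1$.

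The plan is to mimic the technique of Proposition \ref{u3}: for each suspect $n_j\ge 2$ I will exhibit a block splitting $w=B_1B_2\cdots B_t$ such that the reversed word $B_tB_{t-1}\cdots B_1\in\mathtt{BR}(w)$ contains a factor in which a palindrome $p\in\{a^2,b^2\}$ occurs only as prefix and suffix, while the factor itself is manifestly not a palindrome (the mismatch will be produced by a single transposed pair $ab$ inserted in the interior of the reversed concatenation). By Theorem \ref{tglen}, together with Lemma \ref{rich}, this would contradict the richness of that element of $\mathtt{BR}(w)$, forcing $n_j=1$. Concretely, analogues of the reversals $w_1,w_2,w_3$ used in Proposition \ref{u3}, adapted to a word with eight runs rather than nine, will successively rule out $n_j\ge 2$ for $j=2,3,4,5,6,7$. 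After those six reductions, $w$ necessarily has the shape $a^{n_1}bababab^{n_8}$, and two further tailored splittings (one of which is redundant by $w\leftrightarrow w^R$) will rule out $n_1\ge 2$ and $n_8\ge 2$ by the same $a^2\cdots a^2$ or $b^2\cdots b^2$ trick.

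The main obstacle is purely combinatorial: selecting the right block splittings. Each $B_i$ must be a nonempty factor of $w$, the concatenation $B_1\cdots B_t$ must reproduce $w$ exactly, and the reversed concatenation must expose the targeted non-palindromic factor. Once the splittings are written down, checking that the nominated $a^2$ or $b^2$ indeed appears only as prefix and suffix of the chosen factor is a short run-length verification, after which Theorem \ref{tglen} finishes the argument. I anticipate that the boundary cases ($n_1\ge 2$ and $n_8\ge 2$) will demand slightly more care than the interior ones, because there are fewer adjacent runs on one side to combine into the needed factor, so the construction must exploit the interior runs that have already been pinned to length one.
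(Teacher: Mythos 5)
Your strategy for the forward direction is exactly the one the paper uses: assume every element of $\mathtt{BR}(w)$ is rich, write $w=a^{n_1}b^{n_2}\cdots a^{n_7}b^{n_8}$, exploit the reversal symmetry of Remark \ref{o1}, and kill each exponent $\geq 2$ by exhibiting an element of $\mathtt{BR}(w)$ containing a factor with a palindrome $p\in\{a^2,b^2\}$ occurring only as prefix and suffix that is visibly not a palindrome, contradicting Theorem \ref{tglen}. The gap is that you never exhibit those elements. You yourself name ``selecting the right block splittings'' as the main obstacle and then leave it unresolved; but those splittings are the entire mathematical content of the argument. Without them nothing is verified: it is not a priori clear that a witness of the advertised shape exists for each exponent, nor in which order the exponents can be eliminated (the later witnesses only have the right form once the earlier exponents have already been pinned to $1$). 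A plan that says ``analogues of $w_1,w_2,w_3$ from Proposition \ref{u3}, adapted to eight runs, will work'' is a conjecture about the existence of the needed decompositions, not a proof.

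For comparison, the paper needs only two explicit witnesses rather than your six-plus-two: first $b^{n_8+n_6}a^{n_7+n_5}b\,a\,b^{n_4-1+n_2}a^{n_3-1+n_1}\in\mathtt{BR}(w)$, whose factor $b^{2}a^{n_7+n_5}bab^{2}$ forces $n_2=n_4=1$ and hence $n_5=n_7=1$ by Remark \ref{o1}; then, for the reduced word $a^{n_1}ba^{n_3}bab^{n_6}ab^{n_8}$, the witness $b^{n_8}a^{2}b^{n_6+1}aba^{n_3-1+n_1}\in\mathtt{BR}(w)$, whose factor $a^{2}b^{n_6+1}aba^{2}$ forces $n_1=n_3=1$ and hence $n_6=n_8=1$ by Remark \ref{o1}. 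Your treatment of the converse via Example \ref{u2} is acceptable as far as the paper's own conventions go, though the cleaner route is Theorem \ref{7ric}: every element of $\mathtt{BR}(abababab)$ has length $8$ with four $a$'s and four $b$'s, so it cannot be one of the exceptional non-rich words $aabbabaa$, $aababbaa$ or their complements.
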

\begin{proof}
 Let $w$ be a binary word with $l(w)=8$ such that all  elements of $\mathtt{BR}(w)$ are rich. Consider $w=a^{n_1}b^{n_2}a^{n_3}b^{n_4}a^{n_5}b^{n_6}a^{n_7}b^{n_8}$ to be the run-length encoding of $w$ where $n_i\geq 1$ for all $i$. Let $$w'=b^{n_8+n_6} a^{n_7+n_5} b a  b^{n_4-1+n_2} a^{n_3-1+n_1} \in  \mathtt{BR}(w).$$
Then, $w'$ is rich. If $n_4 \geq 2$ or $n_2\geq 2$, then since $v=b^{2} a^{n_7+n_5} b a  b^{2}\in Fac(w')$ and $v$ contains exactly two occurrences of $b^2$, 
by Theorem \ref{tglen},   $b^{2} a^{n_7+n_5} b a  b^{2}$ is a palindrome  which is a contradiction. Thus, $n_2=n_4=1$. Now, by Remark \ref{o1}, we get, $n_7=n_5=1$. Thus,  $w=a^{n_1}ba^{n_3}bab^{n_6}ab^{n_8}$.
 Now, consider $$w''= b^{n_8} a^{2} b^{n_6+1} a b a^{n_3-1+n_1} \in \mathtt{BR}(w).$$ 
 Then, $w''$ is rich. If $n_1\geq 2$ or $n_3\geq 2$, then since $v'=a^{2} b^{n_6+1} a b a^{2} \in Fac(w'')$ and $v'$ contains exactly two occurrences of $a^2$, by Theorem \ref{tglen},  $ a^{2} b^{n_6+1} a b a^{2}$ is a palindrome which is a contradiction. Thus, $n_1=n_3=1$.  Now, by Remark \ref{o1}, we get, $n_8=n_6=1$. Thus,  $w=abababab$.

 The converse follows from  Theorem  \ref{7ric}.
\end{proof}
 We conclude the following from Proposition \ref{g8}.
 \begin{remark}
Let $w$ be a binary word such that  $l(w)=8$ and $|w|>8$. Then, there exists an element in $\mathtt{BR}(w)$ that is not  rich.
 \end{remark}
 We now consider the case when the length of the run sequence of the word is $7$. For a binary word $w$, if $l(w)=7$, then  $|w|\geq 7$. If $|w|=7$, it is well known (\cite{pcofw}) that all elements of $\mathtt{BR}(w)$ are rich. We consider the case when $|w|>7$ in the following.

\begin{proposition}\label{g7}
Let $w$ be a binary word with $|w|>7$ and $l(w)=7$. Then, all elements of $\mathtt{BR}(w)$ are rich iff $w$ is $ababab^2a$, $abab^2aba$ or $ab^2ababa$.
\end{proposition}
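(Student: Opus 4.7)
The plan is to follow the strategy of Proposition \ref{g8}. By Remark \ref{o1} combined with the complementation symmetry (the alphabet is unordered), I may assume
\[
w = a^{n_1} b^{n_2} a^{n_3} b^{n_4} a^{n_5} b^{n_6} a^{n_7}, \quad n_i \geq 1,
\]
so that $|w| > 7$ forces at least one $n_i \geq 2$. The aim of the forward direction is to prove that $n_1 = n_3 = n_5 = n_7 = 1$ and that exactly one of $n_2, n_4, n_6$ equals $2$ (with the other two equal to $1$), which pins $w$ down to the three listed words.

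The central mechanism is to exhibit, for each ``bad'' configuration of the exponents, a block decomposition $w = B_1 \cdots B_t$ whose reversal $B_t \cdots B_1 \in \mathtt{BR}(w)$ fails to be rich. Non-richness is witnessed in two ways: (a) by Theorem \ref{tglen}, showing that a factor $u = p \alpha p$ of the reversal contains exactly two occurrences of a palindrome $p$ (as prefix and as suffix only) yet is itself not a palindrome; or (b) by Lemma \ref{rich} combined with Theorem \ref{7ric}, showing that a length-$8$ factor coincides with one of the non-rich length-$8$ binary words $aababbaa$, $aabbabaa$, $bbabaabb$, $bbaababb$ (the last two obtained by complementation). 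I would first use mechanism (a), with framing palindromes $a^2$ or $b^2$, to rule out $n_j \geq 2$ for $j \in \{1, 3, 5, 7\}$ (applying Remark \ref{o1} to $w^R$ to reduce $j \in \{5,7\}$ to $j \in \{1,3\}$), and similarly to force $n_i \leq 2$ for $i \in \{2, 4, 6\}$.

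The main obstacle is ruling out simultaneous doublings within $\{n_2, n_4, n_6\}$. For $n_2 = n_4 = 2$ (so $w = a b^2 a b^2 a b a$), the decomposition $(abbab)(ba)(ba)$ of $w$ produces the reversal $babaabbab \in \mathtt{BR}(w)$; here $p = bab$ occurs only as prefix and suffix of $u = babaabbab$, yet $u$ is not a palindrome, so Theorem \ref{tglen} yields non-richness. For $n_2 = n_6 = 2$ (so $w = a b^2 a b a b^2 a$), the decomposition $(abba)(ba)(bba)$ of $w$ yields $bbabaabba \in \mathtt{BR}(w)$, whose length-$8$ prefix $bbabaabb$ is non-rich by Theorem \ref{7ric}, so Lemma \ref{rich} supplies the contradiction. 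The case $n_4 = n_6 = 2$ reduces to $n_2 = n_4 = 2$ via Remark \ref{o1}. Combining these with $|w| > 7$ (which forces $n_2 + n_4 + n_6 \geq 4$), the only remaining possibilities are the three listed words. The converse is a finite check: each candidate has length $8$, so $\mathtt{BR}(w)$ is a finite small set, and the richness of its elements follows from Proposition \ref{runlen} together with the classification in Theorem \ref{7ric}.
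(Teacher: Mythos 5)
Your overall strategy is the same as the paper's: write out the run-length encoding $w=a^{n_1}b^{n_2}\cdots a^{n_7}$, force the odd-indexed exponents to $1$ and the even-indexed ones to be at most $2$ by exhibiting elements of $\mathtt{BR}(w)$ containing a non-palindromic complete return to a short palindrome (Theorem \ref{tglen}), and then eliminate the remaining bad configurations. The paper organizes this as a nested case analysis on $n_6$, then $n_2,n_4$, using only $a^2$- and $b^2$-framed returns such as $b^2aba^2b^2$, whereas you bound all exponents first and then exclude pairs of doublings, using a $bab$-framed return for $n_2=n_4=2$ and the length-$8$ classification of Theorem \ref{7ric} for $n_2=n_6=2$. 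Both of your explicit witnesses check out: $babaabbab$ contains $bab$ exactly as prefix and suffix and is not a palindrome, and $bbabaabb$ is the complement of the non-rich word $aababbaa$. These are cosmetic differences; the toolbox is the same. The converse is also fine, since every element of $\mathtt{BR}(w)$ has four $a$'s and four $b$'s while the non-rich length-$8$ words of Theorem \ref{7ric} (and their complements) do not.

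There is, however, one genuine hole in the written argument. Your three ``simultaneous doubling'' cases are each instantiated with the third even exponent equal to $1$ (e.g.\ ``$n_2=n_4=2$, so $w=ab^2ab^2aba$''), so the configuration $n_2=n_4=n_6=2$, i.e.\ $w=ab^2ab^2ab^2a$ (length $10>7$, hence admissible at that stage), is excluded by none of the witnesses you give; in particular $(abbab)(ba)(ba)$ has length $9$ and does not even concatenate to that word. Without this case the final count ``exactly one of $n_2,n_4,n_6$ equals $2$'' does not follow. The fix is immediate and uses your own pattern: $(abbab)(ba)(bba)$ decomposes $ab^2ab^2ab^2a$, its block reversal is $bbabaabbab$, and the factor $b^2aba^2b^2$ contains $b^2$ exactly as prefix and suffix yet is not a palindrome, so this element is not rich by Theorem \ref{tglen}. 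With that case added (and the routine witnesses for the odd-indexed exponents written out, which require two decompositions apiece as in the paper's treatment of $n_5,n_7$), the proof is complete.
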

\begin{proof}
 Let $w$ be a binary word with $l(w)=7$ such that all  elements of $\mathtt{BR}(w)$ are rich.  Consider $w=a^{n_1}b^{n_2}a^{n_3}b^{n_4}a^{n_5}b^{n_6}a^{n_7}$ to be the run-length encoding of $w$ where $n_i\geq 1$ for all $i$. Let $$\alpha = a^{n_7-1+n_5} b^{n_6} a b^{n_4+n_2} a^{n_3+n_1} \in \mathtt{BR}(w)$$ and $$\beta =a^{n_7-1+n_5} b^{n_6} a b^{n_4} a^{n_3+n_1} b^{n_2} \in \mathtt{BR}(w) .$$ Then, $\alpha,\; \beta$ are rich.
 If $n_7 \geq 2$ or $n_5 \geq 2$, then by Theorem \ref{tglen},  $a^{2} b^{n_6} a b^{n_4+n_2} a^{2}$ and $a^{2} b^{n_6} a b^{n_4} a^{2}$ are  palindromic factors of $\alpha$ and $\beta $, respectively. This implies $n_6=n_4+n_2$ and  $n_6=n_4$, which is a contradiction to the fact that $n_2 \geq 1$. Thus, $n_7 = n_5 =1$. \\
 Since, $n_7=n_5=1$, by Remark \ref{o1}, we get, $n_1=n_3=1$.  Thus,  $w=a b^{n_2}a b^{n_4}a b^{n_6} a$.

 We now show that $n_6\leq2$. Consider $\gamma = b^{n_6-1} aa b a b^{n_4+n_2} a \in \mathtt{BR}(w)$. Then, $\gamma$ is rich. If $n_6\geq 3$, then by Theorem \ref{tglen},  $b^{2} a^2 b a b^{2}$ is a palindromic factor of $\gamma$ which is a contradiction. Thus, $n_6\leq 2$. We have the following cases:
 \begin{enumerate}
     \item $n_6=2: $ If $n_4\geq 2$ or $n_2\geq 2$, then $b^2 a^2 ba b^{n_4-1+n_2} a \in \mathtt{BR}(w)$ is not rich  which is a contradiction. Thus, in this case, $n_4=n_2=1$. We have, $w=a b a b a b^{2} a$.
     \item $n_6=1 :$ Here, $w=a b^{n_2}a b^{n_4}a b  a$. If $n_4\geq 2$ and $n_2\geq 2$, $b^{n_4} a b a^2 b^{n_2} a \in \mathtt{BR}(w)$ is not rich which is a contradiction. So, either $n_2=1$ or $n_4=1$. Note that if $n_2=n_4=1$, then $|w|=7$, which is a contradiction. We are left with the following cases:
     \begin{itemize}
         \item $n_2=1$ and $n_4 \geq 2$ : Here,   $w=a b a b^{n_4}a b  a$. If $n_4\geq 3$, $b^{n_4-1} a b a^2 b^2 a \in \mathtt{BR}(w)$ is not rich, a contradiction. Thus, $n_4 =2$ and  $w=a b a b^{2}a b  a$. 
         \item $n_4=1$ and $n_2\geq 2$ :  Here,   $w=a b^{n_2} a ba b  a$. If $n_2\geq 3$, $ a b^2 a b a^2 b^{n_2-1}  \in \mathtt{BR}(w)$ is not rich, a contradiction. Thus, $n_2=2$ and $w=a b^2 a ba b  a$. 
     \end{itemize} 
 \end{enumerate}
The converse follows from Theorem \ref{7ric}.
\end{proof}

 We conclude the following from Proposition \ref{g7}.
 \begin{remark}
Let $w$ be a binary word such that  $l(w)=7$ and $|w|>8$. Then, there exists an element in $\mathtt{BR}(w)$ that is not  rich.
 \end{remark}

 We now consider the case when the length of the run sequence of the word is $6$.  
We  need the following:


 \begin{remark}\label{rem2}
We consider the block reversal of the following words: 
 \begin{enumerate}
     \item  Let $w=a^2 b a b a b^{n_6}$  for $n_6<4$. 
     \begin{itemize}
         \item If $n_6=1$ or $2$, then as   $|w|\leq 8$, by Theorem \ref{7ric}, all  elements of $\mathtt{BR}(w)$ are rich.
        
         \item If $n_6=3$, then  $b a b^2 a^2 bab \in \mathtt{BR}(w)$ is not rich which implies that 
          not all  elements of $\mathtt{BR}(w)$ are rich.
     \end{itemize}
      \item  Let $w=a^3 b a b a b^{n_6}$  for $n_6<4$.  
     
     \begin{itemize}
         \item If $n_6=1$, then by Theorem \ref{7ric}, all  elements of $\mathtt{BR}(w)$ are rich.
         \item  If $n_6=2$, then  $a b a b^2 a^2 b a \in \mathtt{BR}(w)$ is not rich which implies that 
          not all  elements of $\mathtt{BR}(w)$ are rich.
         \item If $n_6=3$, then  $ba b^2 a^3 bab \in \mathtt{BR}(w)$ is not rich which implies that 
          not all  elements of $\mathtt{BR}(w)$ are rich.
     \end{itemize}
       \item  Let $w=a^{n_1} b a b a b$  for $n_1\geq 1$.  
       We show that  all elements of $\mathtt{BR}(w)$ are rich.
    Let $w=D_1 D_2 D_3 \cdots D_k$ where $k \geq 2$ and each $D_i \in \Sigma^+$. Then,     either  $D_1 = a^j$, $D_2=a^{n_1-j}x$ and $D_3 \cdots D_k = y$ where $x, y \in \Sigma^*$, $x y =babab$ and $1 \leq j <n_1$
       or
       $D_1 = a^{n_1} x'$ and $D_2 D_3 \cdots D_k=y'$ where  $x' \in \Sigma^*, y' \in \Sigma^+$ and $x' y' = babab$. Then, for distinct elements of $\mathtt{BR}(w)$, we can divide $w$ in at most  seven non-empty blocks.
\begin{itemize}
\item  When we divide $w$ in two non-empty blocks, then $\mathtt{BR}(w)$ contains the following:\\
       Let, $A_2 = \{ b a^{n_1} baba, ab a^{n_1} bab, bab a^{n_1} ba, abab a^{n_1} b, babab a^{n_1}, a^{n_1-i}babab a^i~ |~ 1 \leq i \leq n_1-1 \}.$ We can observe that each element of $A_2$ is rich.
       \item   When we divide $w$ in three non-empty blocks, then $\mathtt{BR}(w)$ contains the following:\\
       Let,  $A_3 = \{ ba a^{n_1} bab, bba a^{n_1}ba, baba a^{n_1}b, bbaba a^{n_1}, b a^{n_1-i} baba a^{i}, abba^{n_1}ba, ababa^{n_1}b, ab bab a^{n_1},\\ ab a^{n_1-i} bab a^{i}, babba a^{n_1}, bab a^{n_1-i} ba a^{i}, abab b a^{n_1}, abab a^{n_1-i} b a^{i}, babab a^{n_1}      ~|~ 1 \leq i \leq n_1-1  \}$. We can observe that each element of $A_3$ is rich.
       \item    When we divide $w$ in four non-empty blocks, then $\mathtt{BR}(w)$ contains the following:\\
        Let, $A_4 = \{ bab a^{n_1} ba, baab a^{n_1} b, babab a^{n_1}, ba a^{n_1-i} bab a^{i}, bbaa a^{n_1} b, bb aba a^{n_1}, bba a^{n_1-i} ba a^{i}, baba a^{n_1-i} b a^{i},\\ b baba a^{n_1}, abba a^{n_1} b, abbba a^{n_1}, abb a^{n_1-i} ba a^{i}, ababb a^{n_1}, abab a^{n_1-i} b a^{i}, abb ab a^{n_1}, babba a^{n_1}, ababb a^{n_1}   ~|~ 1 \leq i \leq n_1-1 \}$. We can observe that  each element of $A_4$ is rich.
        \item When we divide $w$ in five non-empty blocks, then $\mathtt{BR}(w)$ contains the following:\\
        Let, $A_5 =\{ baba a^{n_1} b, babba a^{n_1}, bab a^{n_1-i} ba a^{i}, baab b a^{n_1}, baab a^{n_1-i} b a^{i}, ba bab a^{n_1}, bbaa b a^{n_1}, bb aa a^{n_1-i} b a^{i},\\ bbaba a^{n_1}, abbab a^{n_1}, abba a^{n_1-i} b a^{i}, ababb a^{n_1}, a bbb a a^{n_1}     ~|~ 1 \leq i \leq n_1-1   \}$.  We can observe that  each element of $A_5$ is rich.
        \item  When we divide $w$ in six non-empty blocks, then $\mathtt{BR}(w)$ contains the following:\\
         Let, $A_6 = \{  babab a^{n_1}, baba a^{n_1-i} b a^{i}, babba a^{n_1}, baabb a^{n_1}, bbaab a^{n_1}, abb ab a^{n_1}    ~|~ 1 \leq i \leq n_1-1\}$. We can observe that each element of $A_6$ is rich.
         \item When we divide $w$ in seven non-empty blocks, then $\mathtt{BR}(w)$ contains the following:\\
          Let, $A_7=\{ babab a^{n_1} \}$. Clearly, $babab a^{n_1}$ is rich.
          Thus, each element of $A_7$ is rich. Also, as   $babab a^{n_1}$ is rich, $w=a^{n_1} babab $ is rich. 
\end{itemize}
          Therefore, as $\mathtt{BR}(w)= \{w\} \cup A_2 \cup A_3 \cup A_4 \cup A_5 \cup A_6 \cup A_7$, each element of $\mathtt{BR}(w)$ is rich.

          In a similar fashion one can also show that each element of $\mathtt{BR}(ababab^{n_1})$ is also rich.

 \end{enumerate}
 \end{remark}

We now have the following:
\begin{proposition}\label{g6}
Let $w$ be a binary word with $|w|>7$ and $l(w)=6$. Then, all  elements of $\mathtt{BR}(w)$ are rich iff $w\in \{u, (u^c)^R\;|\; u\in  T\}$ where $$T=\{a b^2 a b a^2 b, a b a b^2 a^2 b, a b a b a^2 b^2, a^2 b a b^2 a b, a^2 b a b a b^2, a b a^2 b^2 a b, a^{n_1} b a b a b\; |\;n_1\geq 3\}.$$ 

\end{proposition}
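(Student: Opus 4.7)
The plan is to follow the template established in Propositions~\ref{g8} and \ref{g7}. Write $w = a^{n_1}b^{n_2}a^{n_3}b^{n_4}a^{n_5}b^{n_6}$ with all $n_i \geq 1$, and by Remark~\ref{o1} together with the complement operation $w\mapsto w^c$, I may reduce to the case that $w$ starts with $a$; the appearance of $(u^c)^R$ in the statement is exactly what this symmetry introduces. Since $|w|>7$, at least one $n_i \geq 2$.

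First I would derive upper bounds on the interior exponents $n_2,n_3,n_4,n_5$. The tactic is: for each index $i$, choose a block decomposition of $w$ whose reversal $w' \in \mathtt{BR}(w)$ contains a factor of the form $x^2 \alpha x^2$ (with $x \in \{a,b\}$) such that $\alpha$ is a short non-palindrome precisely when $n_i$ is too large. Then Theorem~\ref{tglen} and Lemma~\ref{rich} force $\alpha$ to be a palindrome, excluding that case. For example, pulling $a$ from the front of $b^{n_2}$ and $b$ from the front of $b^{n_4}$ into separate blocks produces a reversal containing a pattern like $a^2 b^{n_6} a b^{n_4-1} a^2$, which upon forcing palindromicity constrains $n_4$ and $n_6$ simultaneously. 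Repeating this for symmetric choices of splits should collapse the admissible run sequences to a short finite list, together with the unbounded family where exactly one extremal run ($n_1$ or $n_6$) is large while the rest of the word is alternating.

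Once the residual list of candidate run sequences is in hand, I would dispose of the infinite family $a^{n_1}babab$ and its mirror via Remark~\ref{rem2}(3), which already verifies that every element of $\mathtt{BR}(a^{n_1}babab)$ is rich by explicitly listing the seven size-classes of block reversals. For the six finite words in $T$ of length $8$, the forward direction (all elements of $\mathtt{BR}(w)$ are rich) is a bounded finite check: each reversal has length $8$, so by Theorem~\ref{7ric} I only need to rule out the two non-rich length-$8$ words $aabbabaa$ and $aababbaa$, which a direct inspection of the possible block partitions accomplishes. For candidate run sequences \emph{not} in the list, I would exhibit an explicit non-rich element of $\mathtt{BR}(w)$; the patterns from Example~\ref{u4} (rearrangements containing $a^2b^2aba^2$ or its mirror, or the forbidden length-$8$ words from Theorem~\ref{7ric}) serve as off-the-shelf witnesses.

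The converse direction is routine given the above analysis, since for the six length-$8$ words in $T$ the block reversal is a finite set whose richness can be certified exhaustively. The main obstacle will be the bookkeeping in the forward direction: one must be strategic about the order in which the constraints on $n_2,\ldots,n_5$ are extracted, because careless choices proliferate cases. In particular, the case where two interior exponents equal $2$ simultaneously (producing the words $ab^2aba^2b$, $abab^2a^2b$, $ababa^2b^2$, etc.) requires choosing block splits that isolate the two doubled runs with an alternating buffer in between, so that the resulting $x^2 \alpha x^2$ factors do not accidentally remain palindromes. This delicate choice of witnesses, rather than any deep structural fact, is where the proof requires care.
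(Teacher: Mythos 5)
Your plan follows the paper's proof essentially verbatim: the paper likewise bounds the run exponents by exhibiting, for each over-large $n_i$, an element of $\mathtt{BR}(w)$ containing a factor $x^2\alpha x^2$ with $\alpha$ non-palindromic (contradicting Theorem \ref{tglen}), uses Remark \ref{o1} together with the unordered-alphabet/complement convention to halve the case analysis, disposes of the unbounded family $a^{n_1}babab$ via Remark \ref{rem2}(3), and settles the converse via Theorem \ref{7ric} and Remark \ref{rem2}. The only difference is that you defer the exhaustive case bookkeeping, which is exactly where the paper spends its effort (one subcase is even relegated to the appendix), so nothing in your outline departs from the published route.
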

\begin{proof}
Let $w$ be a binary word  with $|w|>7$ and $l(w)=6$ such that all  elements of $\mathtt{BR}(w)$ are rich. Consider $w=a^{n_1}b^{n_2}a^{n_3}b^{n_4}a^{n_5}b^{n_6}$ to be the run-length encoding of $w$ where $n_i\geq 1$ for all $i$. If $n_5\geq 3$, then by Theorem \ref{tglen}, $ b^{n_6-1} a^{n_5-1} b a b^{n_4+n_2} a^{n_3+n_1} \in \mathtt{BR}(w)$ is not rich. This implies, $n_5\leq 2$.
By Remark \ref{o1}, we get, $n_2\leq 2$. We have the following cases:
\begin{itemize}
    \item $n_5 = 2 : $  If $n_1 \geq 2$ or $n_3\geq 2$, then by Theorem \ref{tglen},  $ b^{n_6-1} a^{2} b a b^{n_4+n_2} a^{n_3-1+n_1} \in \mathtt{BR}(w)$ is not rich which is a contradiction. Thus,  $n_1 =n_3= 1$.  Thus, in this case, we get,
      $w = a b^{n_2} a b^{n_4} a^2 b^{n_6}$ where $n_2\leq 2$.
      We have the following cases:  
    \begin{itemize}
            \item $n_2=2 : $  Then, $w^R = b^{n_6} a^{2} b^{n_4} a b^{2} a$. By Remark  \ref{o1}, all  elements of $\mathtt{BR}(w^R)$ are rich. Here, $n_4=n_6=1$, otherwise $a b^2 a b a^2 b^{n_4-1+n_6} \in \mathtt{BR}(w^R)$ is not rich. Thus, $w = a b^{2} a b a^2 b$.
            \item $n_2 = 1 : $ Then, $w = a b a b^{n_4} a^2 b^{n_6}$. We have the following cases:
                \begin{itemize}
              \item $n_4 \geq 2$ : If $n_6 \geq 2$, then by Theorem \ref{tglen}, $b^{n_6} a^2 b a b^{n_4} a \in \mathtt{BR}(w)$ is not rich which is a contradiction. Otherwise,  $n_6 =1$ and $w = a b a b^{n_4} a^2 b$.  If $n_4 \geq 3$, then by Theorem \ref{tglen}, $b^{n_4-1} a^2 b a b^2 a \in \mathtt{BR}(w)$ is not rich which is a contradiction. Thus, $n_4 =2$ and  $w = a b a  b^{2} a^2 b$.   
              \item    $n_4=1$ :   If $n_6 \geq 3$, then by Theorem \ref{tglen}, $ b^{n_6-1} a^2 b a b^2 a \in \mathtt{BR}(w)$ is not rich which is a contradiction.  Thus, $n_6\leq 2$. If $n_6=1$, then $|w|=7$, thus, $n_6=2$ and  $w = a b a b a^2 b^{2}$.
                \end{itemize}
        \end{itemize}
    \item $n_5 = 1 : $ Here, $w=a^{n_1}b^{n_2}a^{n_3}b^{n_4} a b^{n_6}$. We have the following cases:
        \begin{itemize}
        \item  $n_1 \geq 2$ :  If $n_3\geq 2$, then by Theorem \ref{tglen}, $ a^{n_3} b^{n_4} a b^{n_6} a^{n_1} b^{n_2} \in \mathtt{BR}(w)$  for $n_4 \neq n_6$ ($ a^{n_3} b^{n_4} a b^{n_6+n_2} a^{n_1} \in \mathtt{BR}(w)$ for $n_4 = n_6$, respectively) is not rich which is a contradiction. So, $n_3=1$ and $w= a^{n_1}b^{n_2} a b^{n_4}a b^{n_6}$ where $n_2\leq 2$. We have the following cases:

                \begin{itemize}
                    \item $n_2 = 2 : $  Now, $w^R = b^{n_6} a b^{n_4} a b^2 a^{n_1}$. By Remark \ref{o1},  all  elements of $\mathtt{BR}(w^R)$ are rich. If $n_4\geq 2$ or $n_6\geq 2$, then $a^{n_1-1} b^2 a b a^2 b^{n_4-1+n_6} \in \mathtt{BR}(w^R)$
                    is not rich, a contradiction. Thus, $n_4 = n_6=1$. We get, $w= a^{n_1}b^{2} a b a b $.  If $n_1\geq 3$, then by Theorem \ref{tglen}, $b a^{n_1-1} b^2 a b a^2 \in \mathtt{BR}(w)$ is not rich which is a contradiction.  Thus, $n_1=2$ and $w= a^2 b^{2} a b a b$.
                    \item $n_2 = 1 : $ Then, $w= a^{n_1} b a b^{n_4}a b^{n_6}$. We have the following cases:     \begin{itemize}
                 \item  $n_4\geq 2$ : If $n_6 \geq 2$,   then by Theorem \ref{tglen},   $b^{n_6} a^{n_1} b a b^{n_4}a \in \mathtt{BR}(w)$  is not rich which is a contradiction. Otherwise, $n_6=1$. Here, $n_1=2$,  otherwise, by Theorem \ref{tglen}, $b a^{n_1-1} b a b^{n_4} a^2 \in \mathtt{BR}(w)$  is not rich. Thus, $w= a^{2} b a b^{n_4} a b$. Also, $n_4=2$, otherwise, by Theorem \ref{tglen}, $ab b^{n_4-2} a^2 b a b^2 \in \mathtt{BR}(w)$ is not rich. Thus, $w= a^{2} b a b^{2} a b$.
                 
               \item  $n_4 = 1 : $   If $n_6\geq 4$, then by Theorem \ref{tglen}, $b^{n_6-2} a^{n_1} b a b a b^2\in \mathtt{BR}(w)$ is not rich which is a contradiction. Thus, $w=a^{n_1} b a b a b^{n_6}$
                 for $1\leq n_6\leq 3$. In this case, if $n_1\geq 4$ and $2\leq n_6 \leq 3$, then by Theorem \ref{tglen}, $a^{n_1-2} b a b a b^{n_6} a^2 \in \mathtt{BR}(w)$ is not rich. We are left with the words
                 $a^2 b a b a b^{n_6}$, $a^3 b a b a b^{n_6}$ for $n_6<4$ and $a^{n_1} b a b a b$ for $n_1\geq 1$. By Remark \ref{rem2}, one can conclude that if all  elements of $\mathtt{BR}(w)$ are rich, then $w$ is $a^{n_1} b a b a b$ or $a^{2} b a b a b^{2}$ where $n_1 \geq 3$.

                        \end{itemize}
                \end{itemize}
            
            \item $n_1=1$
             : Similar to the case $n_1=2$, one can prove that if all  elements of $\mathtt{BR}(w)$ are rich, then $w$ is of one of the following forms: 
             $a b^2 a^2 b a b,\; a b a^2 b^2 a b, \; a b a^2 b a b^2,\; a b a b a b^{n_6} $, where $n_6 \geq 3$.

        \end{itemize}

    \end{itemize}
      The converse follows from Theorem \ref{7ric} and  Remark \ref{rem2}.
    
\end{proof}

We now consider the case when the length of the run sequence of the word is $5$.

\begin{proposition}\label{g5}
Let $w$ be a binary word with $|w|>7$ and $l(w)=5$. Then, all  elements of $\mathtt{BR}(w)$ are rich iff $w$ is   $a^{n_1}ba^{n_3}ba^{n_5}$, $ab^{n_2}ab^{n_4}a^2$,  $a^2b^{n_2}ab^{n_4}a$, 
$ab^{n_2}a^2b^{n_4}a$ where $n_2+n_4 =4$ for   $n_i\geq 1$.
\end{proposition}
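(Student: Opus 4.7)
The plan is to follow the template established in Propositions~\ref{g7} and \ref{g6}. By Remark~\ref{o1} and the fact that $\Sigma = \{a,b\}$ is unordered, I assume WLOG that the run-length encoding is $w = a^{n_1} b^{n_2} a^{n_3} b^{n_4} a^{n_5}$ with each $n_i \geq 1$. For the forward direction, I would construct a short list of specific elements of $\mathtt{BR}(w)$ and apply Theorem~\ref{tglen} to extract constraints on the exponents.

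First, by splitting $w$ so that a single letter from the middle $a^{n_3}$ run is isolated and choosing the remaining blocks accordingly, one obtains reversals such as $a^{n_5-1+n_3} b^{n_4} a b^{n_2} a^{n_1}$ and, after rearranging, $a^{n_5} b^{n_4} a^{n_3-1} b^{n_2} a^{n_1+1}$. If $n_5 \geq 2$ and $n_3 \geq 2$, the factor $a^2 b^{n_4} a b^{n_2} a^2$ contains exactly two occurrences of $a^2$ as prefix and suffix and therefore must be a palindrome by Theorem~\ref{tglen}, forcing $n_2 = n_4$; a complementary block reversal then produces a companion factor with $a^2$-prefix-suffix that forces $n_2 \neq n_4$, a contradiction. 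Applying Remark~\ref{o1} to $w^R = a^{n_5} b^{n_4} a^{n_3} b^{n_2} a^{n_1}$, the same argument handles the $(n_1, n_3)$ pair. The net effect is that at most one of $n_1, n_3, n_5$ can exceed $1$.

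Next I would split into two main branches. If $n_2 = n_4 = 1$, then $w = a^{n_1} b a^{n_3} b a^{n_5}$ and no further restriction on $n_1, n_3, n_5$ is forced; this is the first listed family. Otherwise, WLOG $n_2 \geq 2$ (the $n_4 \geq 2$ case being symmetric via Remark~\ref{o1}). Block reversals that shuffle the $b$-runs past the $a$-runs, combined with Theorem~\ref{tglen}, force $n_1, n_3, n_5 \leq 2$ with exactly one of them equal to $2$, and subsequent reversals pin $n_2 + n_4$ to $4$. The remaining possibilities match exactly the three forms $a b^{n_2} a b^{n_4} a^2$, $a^2 b^{n_2} a b^{n_4} a$, $a b^{n_2} a^2 b^{n_4} a$ with $n_2 + n_4 = 4$, each of length $8$. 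For these finite forms the converse is immediate from Theorem~\ref{7ric}, after verifying by direct inspection that no element of $\mathtt{BR}(w)$ equals $aabbabaa$ or $aababbaa$ (or their reverses/complements).

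The principal obstacle is the converse for the three-parameter family $w = a^{n_1} b a^{n_3} b a^{n_5}$, where $|w|$ is unbounded, so Theorem~\ref{7ric} does not apply directly. Every element of $\mathtt{BR}(w)$ still contains exactly two occurrences of $b$ and hence has the shape $a^i b a^j b a^k$ or $a^i b^2 a^j$. The argument would hinge on an auxiliary lemma: \emph{every binary word containing at most two occurrences of one letter is rich}, proved by enumerating its palindromic factors (the powers $a^s$, the central $b a^j b$ or $b^2$, the nested $a^s b a^j b a^s$, and the $a^s b a^s$ centered at each $b$) and matching the count against the word length via Theorem~\ref{tglen}. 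With this lemma in hand, richness of every $\mathtt{BR}$-element follows immediately. This structural lemma, together with the tedious bookkeeping of block-reversal cases in the forward direction, forms the technical crux of the proof.
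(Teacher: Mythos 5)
Your overall skeleton matches the paper's (constrain the exponents by exhibiting non-rich elements of $\mathtt{BR}(w)$ via Theorem \ref{tglen}, invoke Theorem \ref{7ric} for the length-$8$ forms, and treat the unbounded family separately), and your auxiliary lemma for the converse of the family $a^{n_1}ba^{n_3}ba^{n_5}$ is correct and arguably cleaner than the paper's appeal to Theorem \ref{tglen}: every element of $\mathtt{BR}(w)$ has exactly two $b$'s, hence is either of run-sequence length at most $4$ (rich by Proposition \ref{runlen}) or of the form $a^iba^jba^k$ with $i,j,k\geq 1$, whose palindromic factors number $\max(i,j,k)+\bigl(\max(\min(i,j),\min(j,k))+1\bigr)+\bigl(\min(i,k)+1\bigr)=i+j+k+2$.

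The forward direction, however, has a genuine gap. Your opening claim that ``at most one of $n_1,n_3,n_5$ can exceed $1$'' is false as stated: it would exclude words such as $aba^2ba^2$, which lie in the first family of the proposition, and you contradict it yourself one paragraph later when you say the branch $n_2=n_4=1$ imposes no restriction on $n_1,n_3,n_5$. The mechanism you offer cannot be repaired into a blanket statement: the factor $a^2b^{n_4}ab^{n_2}a^2$ forces only $n_2=n_4$, and when $n_2=n_4=1$ that factor genuinely is a palindrome, so the unnamed ``companion factor that forces $n_2\neq n_4$'' cannot exist there; moreover that factor occurs in $a^{n_5-1+n_3}b^{n_4}ab^{n_2}a^{n_1}$ only when $n_1\geq 2$, and you never treat the pair $(n_1,n_5)$ (needed, e.g., to exclude $a^2b^2aba^2$). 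More seriously, in the branch $n_2+n_4\geq 3$ the entire content of the proof --- that $n_1,n_5\leq 2$, that the case split on $n_5\in\{1,2\}$ forces $n_1=n_3=1$ (resp.\ reduces to $n_1\leq 2$ with $n_3\leq 2$), and that explicit witnesses such as $b^{n_4-3+i}a^2bab^2a$ and $b^{n_4}a^2bab^{n_2-1}a$ rule out $n_2+n_4\geq 5$ --- is asserted in one sentence (``block reversals that shuffle the $b$-runs past the $a$-runs \dots pin $n_2+n_4$ to $4$'') without exhibiting a single witness. Since the proposition is exactly a catalogue of which exponent patterns survive, naming those witnesses is the proof; as written, it is missing.
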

\begin{proof}
Let $w$ be a binary word with $|w|>7$  and $l(w)=5$ such that all  elements of $\mathtt{BR}(w)$ are rich. Consider $w=a^{n_1}b^{n_2}a^{n_3}b^{n_4}a^{n_5}$ to be the run-length encoding of $w$ where $n_i\geq 1$ for all $i$.
If  $n_2 = n_4 = 1$, then by Theorem \ref{tglen}, all  elements of $\mathtt{BR}(w)$ are rich. Now, consider $n_2 + n_4 \geq 3$.  If $n_5 \geq 3$, then by Theorem \ref{tglen}, $a^{n_5-1} b a b^{n_4-1+n_2} a^{n_3+n_1} \in  \mathtt{BR}(w)$ is not rich which is a contradiction. Thus,  $n_5\leq 2$. By Remark \ref{o1}, we get,   $n_1\leq 2$.  We are left with the following cases: 
\begin{itemize}
    \item   $n_5=2 : $ Then, $w=a^{n_1}b^{n_2}a^{n_3}b^{n_4}a^{2}$ where $n_2 + n_4 \geq 3$ and $n_1\leq 2$. If $n_3\geq 2$ or $n_1= 2$, then by Theorem \ref{tglen}, $a^{2} b a b^{n_4-1+n_2} a^{n_3-1+n_1} \in \mathtt{BR}(w)$ is not rich which is a contradiction. Thus, $n_3=n_1=1$ and in this case, $w=a b^{n_2} a b^{n_4} a^{2}$ where $n_2 + n_4 \geq 3$.  As $|w|>7$, we get, $n_2+n_4\geq 4$. If $n_2+n_4= 4$, then by Theorem \ref{7ric}, all  elements of $\mathtt{BR}(w)$ are rich. We now consider the case when $n_2 + n_4 \geq 5$. We have the following cases:
    \begin{itemize}
      \item  $n_2=i$ for $1\leq i\leq 3$:  Then, $n_4 \geq 5-i$ and $w=a b^i a b^{n_4} a^{2}$. By Theorem \ref{tglen}, we get,  $b^{n_4-3+i} a^2 b a b^2 a \in \mathtt{BR}(w)$ is not rich which is a contradiction.
        \item $n_2 \geq 4 : $ Then, $n_4 \geq 1$ and $w=a b^{n_2} a b^{n_4} a^{2}$. If $n_4 = 1$, then $b^{n_2-2} a b a^2 b^2 a \in \mathtt{BR}(w)$ is not rich which is a contradiction. If $n_4 \geq 2$, then $b^{n_4} a^2 b a b^{n_2-1} a \in \mathtt{BR}(w)$ is not rich which is a contradiction.

    \end{itemize}
  Hence, if $n_5=2$ and  all  elements of $\mathtt{BR}(w)$ are rich, then $w=a b^{n_2} a b^{n_4} a^{2}$ where $n_2+n_4=4$.  
  \item
  $n_5= 1 $: If $n_1=2$, then by  Remark \ref{o1}, we get from the case $n_5=2$, if  all  elements of $\mathtt{BR}(w)$ are rich, then $w= a^2 b^{n_2} a b^{n_4} a$ 
  where $n_2+n_4=4$. Otherwise, $n_1=1$. Then, $w=ab^{n_2}a^{n_3}b^{n_4}a$ where $n_2+n_4 \geq 3$. We have the following cases:
   \begin{itemize}
             \item  $n_3 \geq 3$ :  If $n_2=n_4$, then since $n_2+n_4 \geq 3$, we get both $n_2,\;n_4 \geq 2$. By Theorem \ref{tglen}, $a^{n_3-1} b^{n_4} a b a^{2} b^{n_2-1} \in \mathtt{BR}(w)$ is not rich which is a contradiction. Thus, $n_2 \neq n_4$. By Theorem \ref{tglen}, $a^{n_3-1} b^{n_4} a b^{n_2} a^{2} \in \mathtt{BR}(w)$ is not rich which is a contradiction.

                \item $n_3\leq 2$ : Then, $w = a b^{n_2} a^{n_3} b^{n_4} a$ where $n_2+n_4\geq 3$. As $|w|>7$, we get,  $n_2+n_4\geq 4$. If $n_2+n_4 = 4$, then by Theorem \ref{7ric}, all  elements of $\mathtt{BR}(w)$ are rich. Thus,  $w = a b^{n_2} a^{n_3} b^{n_4} a$ where $n_2+n_4 = 4$.
                Now, we consider the case when $n_2+n_4\geq 5$.  We have the following cases:
                  \begin{itemize}
                      \item  $n_2=i$ for $1\leq i\leq 3$:  Then, $n_4 \geq 5-i$ and $w=a b^i a^{n_3} b^{n_4} a$. By Theorem \ref{tglen}, we get,  $b^{n_4-3+i} a b a^2 b^2 a \in \mathtt{BR}(w)$ for $n_3=2$ ( $b^{n_4-3+i} a^2 b a b^2  \in \mathtt{BR}(w)$ for $n_3=1$, respectively) is not rich which is a contradiction.
                  
                      \item  $n_2 \geq 4 : $ Then, $n_4 \geq 1$ and $w = a b^{n_2} a^{n_3} b^{n_4} a$. If $n_4=1$, then  by Theorem \ref{tglen}, $b^{n_2-2} a^2 b a b^2 a \in \mathtt{BR}(w)$ for $n_3 =2$ ( $b^{n_2-2} a b a^2 b^2  \in \mathtt{BR}(w)$ for $n_3 =1$, respectively ) is not rich which is a contradiction. Otherwise, $n_4 \geq 2$.  By Theorem \ref{tglen}, $b^{n_4} a b a^2 b^{n_2-1} a \in \mathtt{BR}(w)$ for $n_3=2$ ( $b^{n_4} a^2 b a b^{n_2-1}  \in \mathtt{BR}(w)$ for $n_3=1$, respectively) is not rich which is a contradiction. 
                  \end{itemize}

        \end{itemize}
    
    \end{itemize}
    The converse follows from Theorems  \ref{tglen} and \ref{7ric}.

\end{proof}

 We  consider the case when the length of the run sequence of the word is $4$ and the length of the word is greater than $7$. We have the following result.
\begin{proposition}\label{g4}
Let $w$ be a binary word with $|w|>7$ and $l(w)=4$. Then, all  elements of $\mathtt{BR}(w)$ are rich iff $w$ is of the form $ab^{n_2}ab^{n_4}$ or $a^{n_1}b a^{n_3}b$ or $w\in S$ where 
\[S=\big\{
a^{n_1}b^{n_2}a^{n_3}b^{n_4}| (n_2,n_4), (n_1,n_3) \in \{(3,1),(2,2),(1,3)\}\big\} \]
and $n_i\geq 1$.

\end{proposition}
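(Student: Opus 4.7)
The plan is to mirror the case-analysis strategy of Propositions \ref{g8}--\ref{g5}: write the run-length encoding $w = a^{n_1}b^{n_2}a^{n_3}b^{n_4}$ with each $n_i \geq 1$, assume every element of $\mathtt{BR}(w)$ is rich, exhibit carefully chosen elements of $\mathtt{BR}(w)$, and apply Theorem \ref{tglen} together with Remark \ref{o1} to force $(n_1,n_2,n_3,n_4)$ into the stated list; the converse will then be a finite verification via Theorem \ref{7ric}.

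First I would dispose of the two easy infinite families. If $n_1 = n_3 = 1$, then every $u \in \mathtt{BR}(w)$ satisfies $|u|_a = 2$, so $u$ has at most four maximal runs and is rich by Proposition \ref{runlen}. The symmetric case $n_2 = n_4 = 1$ follows by swapping the roles of $a$ and $b$ (richness is preserved under the letter-interchange morphism $\phi$, and $\phi$ commutes with $\mathtt{BR}$, so the argument applies verbatim to $w^c$). These yield the first two families.

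The heart of the proof handles the remaining case $\max(n_1,n_3) \geq 2$ and $\max(n_2,n_4) \geq 2$. The goal is to show $|w| \leq 8$, which combined with $|w| > 7$ forces $|w| = 8$ and $n_1+n_3 = n_2+n_4 = 4$, so each pair lies in $\{(1,3),(2,2),(3,1)\}$, exactly describing $S$. The main device is the split family $w = (a^k)(a^{n_1-k}b^{n_2}a^{n_3}b^{n_4})$ for $1 \leq k \leq n_1-1$, which produces $a^{n_1-k}b^{n_2}a^{n_3}b^{n_4}a^k \in \mathtt{BR}(w)$: choosing $k$ with $n_3 < k \leq n_1/2$ (possible exactly when $n_1 \geq 2n_3+2$) makes $a^k$ occur only as prefix and suffix of the factor $a^k b^{n_2} a^{n_3} b^{n_4} a^k$, so by Theorem \ref{tglen} this factor must be a palindrome, forcing $n_2 = n_4$. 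A companion split such as $(a^{n_1-1})(a b^{n_2})(a^{n_3})(b^{n_4})$ produces $b^{n_4}a^{n_3+1}b^{n_2}a^{n_1-1} \in \mathtt{BR}(w)$, and an analogous return-word analysis there forces $n_2 \neq n_4$ once $n_1+n_3$ is too large, producing the required contradiction. The reversal symmetry from Remark \ref{o1} yields the parallel bound $n_2 + n_4 \leq 4$, completing the upper bound on $|w|$.

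For the converse, the infinite-family part uses the run-count argument above. For each of the nine candidates $w \in S$ we have $|w| = 8$; by Theorem \ref{7ric} a length-$8$ binary word is non-rich only when it equals $aabbabaa$ or $aababbaa$ (up to the $\{a,b\}$-swap), so it suffices to enumerate $\mathtt{BR}(w)$ for each $w \in S$ and verify that these four forbidden strings do not appear. I expect the main obstacle to be the upper bound $n_1 + n_3 \leq 4$: the flexibility of $\mathtt{BR}$ means there are many candidate splits to consider, and one must pair two specifically chosen ones whose Theorem \ref{tglen}-consequences become inconsistent as soon as $n_1+n_3 \geq 5$. Careful use of the reversal symmetry of Remark \ref{o1} and the letter-interchange symmetry will be essential to keep the case tree from exploding.
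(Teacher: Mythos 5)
Your plan follows the paper's template, but two of its load-bearing steps do not hold up. First, the easy families: from $|u|_a=2$ you conclude that every $u\in\mathtt{BR}(ab^{n_2}ab^{n_4})$ has at most four maximal runs, which is false. A binary word with exactly two occurrences of $a$ can have five runs, namely the form $b^iab^jab^k$ with $i,j,k\geq 1$, and such words genuinely occur in the block reversal: for instance $b^3ab^2ab\in\mathtt{BR}(ab^3ab^3)$ via the split $(ab)(b)(bab)(bb)$. Proposition \ref{runlen} therefore does not cover all of $\mathtt{BR}(w)$ here, and you need a separate argument (Theorem \ref{tglen}, or Guo et al.'s run-length criterion) to certify that every word $b^iab^jab^k$ is rich. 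The same defect propagates into your converse for the two infinite families.

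Second, and more seriously, the key bound $n_1+n_3\leq 4$. Your first split only produces information when $n_1\geq 2n_3+2$, so configurations such as $(n_1,n_3)=(3,2),(2,3),(1,4)$ --- all with $n_1+n_3\geq 5$ --- are never reached by it, and you supply no mechanism for them. Worse, the ``companion split'' yields $b^{n_4}a^{n_3+1}b^{n_2}a^{n_1-1}$, whose run sequence has length at most $4$; by Proposition \ref{runlen} this word is rich unconditionally, so no analysis of it can force $n_2\neq n_4$, and the intended contradiction never materialises. The paper's proof instead exhibits block reversals with six or seven runs, e.g.\ $b^ia^{n_3-2}b^jab^ra^{1+n_1}b^s$ with $j\neq r$ when $n_3\geq 4$, and analogous witnesses for $n_3=3,2,1$; their failure of Theorem \ref{tglen} is what pins down $n_3\leq 3$ together with the coupled constraints $n_3=3\Rightarrow n_1=1$, $n_3=2\Rightarrow n_1\leq 2$, $n_3=1\Rightarrow n_1\leq 3$, whence $n_1+n_3\leq 4$; Remark \ref{o1} then gives $n_2+n_4\leq 4$ and $|w|>7$ forces $w\in S$. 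Your treatment of the converse for $S$ (length-$8$ words checked against the two exceptional forms of Theorem \ref{7ric}) is sound.
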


\begin{proof}
Let $w$ be a binary word with $l(w)=4$ such that all  elements of $\mathtt{BR}(w)$ are rich. Consider $w=a^{n_1}b^{n_2}a^{n_3}b^{n_4}$ to be the run-length encoding of $w$ where $n_i\geq 1$ for all $i$.
If $n_1=n_3=1$ or $n_2= n_4 = 1$, then all  elements of $\mathtt{BR}(w)$ are rich. Now, we consider the case $n_1+n_3 \geq 3$ and $n_2 + n_4 \geq 3$. If $n_3 \geq 4$, then for $j \neq r$ and $i+j=n_4$, and $r+s = n_2$, consider  $\alpha = b^i a^{n_3-2}  b^j a  b^r a^{1+n_1} b^s \in \mathtt{BR}(w)$.  By Theorem \ref{tglen}, $\alpha$ is not rich which is a contradiction. Thus, $n_3 \leq 3$. We have the following cases:
\begin{itemize}
    \item $n_3=3$ : Then, for  $j' \neq r'$ and $i'+j'=n_4$, and $r'+s' = n_2$, consider  $\beta = b^{i'} a^{2}  b^{j'} a  b^{r'} a^{n_1} b^{s'} \in \mathtt{BR}(w)$. If $n_1 \geq 2$, then by Theorem \ref{tglen}, $\beta$ is not rich which is a contradiction. Thus, if $n_3=3$, then $n_1=1$. 
    
    \item $n_3=2: $ Then, for $j'' \neq  n_2$ and $i''+j''=n_4$, consider  $\gamma = b^{i''} a^{2}  b^{j''} a  b^{n_2} a^{n_1-1} \in \mathtt{BR}(w) $. If $n_1 \geq 3$, $\gamma $ is not rich which is a contradiction. Thus, if $n_3=2$, then $n_1\leq 2$.
    
    \item  $n_3=1 :$ Then, by Theorem \ref{tglen}, for $n_1 \geq 4$, $a^{n_1-2}b^{n_2}a b^{n_4} a^2 \in \mathtt{BR}(w)$ is not rich when $n_2 \neq n_4$ and  $ b a^{n_1-2}b^{n_2}a b^{n_4-1} a^2 \in \mathtt{BR}(w)$ is not rich when $n_2 = n_4$. Thus, if $n_3=1$, then $n_1\leq 3$.
    
\end{itemize}

Now, $w^R = b^{n_4} a^{n_3} b^{n_2} a^{n_1}$. Then from Remark \ref{o1},
one can similarly deduce that $n_2\leq 3$ and we also conclude  the following:
\begin{itemize}
    \item If $n_2=3$, then $n_4=1$.
    \item If $n_2=2$, then $n_4\leq 2$. 
\item If $n_2=1$, then $n_4\leq 3$.

\end{itemize}

Hence, as $|w|>7$, we get, $w\in S$ where 
\[S=\big\{
a^{n_1}b^{n_2}a^{n_3}b^{n_4}| (n_2,n_4), (n_1,n_3) \in \{(3,1),(2,2),(1,3)\}\big\} \]
for $n_i\geq 1$ and $n_1+n_2+n_3+n_4>7$.\\
The converse follows from  Theorems \ref{tglen} and \ref{7ric}. 
\end{proof}

We  consider the case when the length of the run sequence of the word is $3$ and the length of the word is greater than $7$. We need the following result.

\begin{lemma}\label{l1}
Let  $w=a^{n_1} b^{n_2} a$ or $a b^{n_2} a^{n_1}$ where $n_1\geq 1$ and $n_2\in \{3, 4\}$. Then, all  elements of $\mathtt{BR}(w)$ are rich.
\end{lemma}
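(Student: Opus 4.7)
By Remark~\ref{o1}, all elements of $\mathtt{BR}(ab^{n_2}a^{n_1})$ are rich iff all elements of $\mathtt{BR}((ab^{n_2}a^{n_1})^R) = \mathtt{BR}(a^{n_1}b^{n_2}a)$ are rich, so it suffices to prove the claim for $w = a^{n_1}b^{n_2}a$. Any block partition of $w$ is determined by a subset of the $n_1+n_2$ inter-letter positions chosen as cuts; I classify these cuts as Type~I (inside $a^{n_1}$), Type~II (at the single $a$-$b$ boundary), Type~III (inside $b^{n_2}$), and Type~IV (at the single $b$-$a$ boundary). For each of the four combinations of whether the two boundary cuts (II and IV) are present, I would directly compute $w' = B_t\cdots B_1$, using the fact that adjacent blocks sharing the same letter merge into a single run after reversal. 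This yields an explicit, finitely-parameterized list of forms taken by every $w' \in \mathtt{BR}(w)$.

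The bulk of these forms have $l(w') \le 4$, and richness for them follows directly from Proposition~\ref{runlen}. For $n_2=3$ the remaining forms (with $l(w') \in \{5,6\}$) reduce to the short list (parameterized by $1 \le \alpha \le n_1-1$):
\[
ab^2a^\alpha b a^{n_1-\alpha},\quad aba^\alpha b^2 a^{n_1-\alpha},\quad bab\, a^{n_1} b,\quad bab\, a^\alpha b a^{n_1-\alpha}.
\]
For $n_2=4$ an analogous but slightly longer list is obtained, containing forms such as $ab^3a^\alpha b a^{n_1-\alpha}$, $ab^2 a^\alpha b^2 a^{n_1-\alpha}$, and $b^{\gamma_1}ab^\delta a^{n_1}b^{\gamma_2}$ (together with its $\alpha < n_1$ variant) with $\gamma_1+\delta+\gamma_2 = 4$.

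For each such remaining form I would verify richness via Theorem~\ref{tglen}(ii): show that any factor $u$ containing exactly two occurrences of some palindrome $p$ as its prefix and suffix only is itself a palindrome. Since $n_2 \le 4$ bounds all $b$-run lengths in $w'$, the candidate bookend palindromes $p$ are drawn from a short explicit list (powers of $a$, together with $b, bb, bbb, bbbb, aba, bab, abba, baab, abbba, abbbba$). The restricted run-structure of each form then forces the $a$-runs on either side of $p$ within $u$ to have equal length, which makes $u$ a palindrome.

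The main obstacle lies in this final case-by-case verification. Each $5$- or $6$-run form has an interior $a$-block of length comparable to $n_1$, so one must argue uniformly in the parameters $\alpha$ and $n_1$ that no non-palindromic factor bookended by a common short palindrome can arise. The bound $n_2 \le 4$ is precisely what keeps the list of candidate bookends finite and structured enough to permit such a uniform argument.
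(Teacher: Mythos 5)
Your proposal follows essentially the same route as the paper: enumerate the possible run-length structures of the elements of $\mathtt{BR}(a^{n_1}b^{n_2}a)$, dispose of those with run sequence of length at most $4$ by Proposition \ref{runlen}, and check the remaining $5$- and $6$-run forms (your lists agree with the paper's $ab^{i_1}a^{i_2}b^{i_3}a^{i_4}$, $b^{j_1}ab^{j_2}a^{n_1}b^{j_3}$ and $b^{k_1}ab^{k_2}a^{k_3}b^{k_4}a^{k_5}$) against Theorem \ref{tglen}. Your use of Remark \ref{o1} to reduce $ab^{n_2}a^{n_1}$ to the reversed word is a slightly cleaner way of handling what the paper dismisses as ``similar'', and your final Theorem-\ref{tglen} verification, though only sketched, is left at essentially the same level of detail as the paper's own ``By Theorem \ref{tglen}, $w'$ is rich.''
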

\begin{proof}
We prove the result for  $w=a^{n_1} b^{n_2} a$. The proof for $w=a b^{n_2} a^{n_1}$ is similar. Let  $w=a^{n_1} b^{n_2} a$ where $n_1\geq 1$ and $n_2\in \{3, 4\}$. Consider $w' \in \mathtt{BR}(w)$. Then, one can observe that  $2\leq l(w')\leq 6$. If $l(w')\leq 4$, then from Proposition \ref{runlen}, $w'$ is rich. We are left with the following cases:
\begin{itemize}
    \item $l(w')=5 :$ Then, $w'$ is either $a b^{i_1} a^{i_2} b^{i_3} a^{i_4}$ or $b^{j_1} a b^{j_2} a^{n_1} b^{j_3}$ where $i_1+i_3=n_2$, $i_2+i_4 = n_1$ and $j_1+j_2+j_3=n_2$. By Theorem \ref{tglen}, $w'$ is rich.
    \item $l(w')=6 :$ Then, $w'= b^{k_1} a b^{k_2} a^{k_3} b^{k_4} a^{k_5} $ where $k_1+k_2+k_4=n_2$ and $k_3+k_5=n_1$. By Theorem \ref{tglen}, $w'$ is rich.
\end{itemize}
\end{proof}
We have the following:
\begin{proposition}\label{g3}
Let $w$ be a binary word with $|w|>7$ and $l(w)=3$. Then, all  elements of $\mathtt{BR}(w)$ are rich iff $w$ is of one of the following forms:
\begin{itemize}
\item $a^2 b^4 a^2$, $ab^{m_2}a$,   $a^{m_1}ba^{m_3}$, $a^{m_1}b^2a^{m_3}$.
\item $a b^{n_2} a^{n_3}, a^{n_1} b^{n_2} a $  where $n_2 \in \{ 3, 4\}$ and $ n_1, n_3 \geq 3$.

\end{itemize}
where $m_i\geq 1$.
\end{proposition}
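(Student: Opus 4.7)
The strategy is to write $w = a^{n_1}b^{n_2}a^{n_3}$ with $n_i \geq 1$ and argue by cases on $n_2$, exploiting the $n_1 \leftrightarrow n_3$ symmetry from Remark \ref{o1}. Non-richness is detected throughout by exhibiting block reversals that contain one of the non-rich binary factors $aabbabaa$ or $aababbaa$ (up to complementation, by Theorem \ref{7ric}) or one of their length-$(s+6)$ generalizations such as $b^2a^sbab^2$ for $s \geq 3$, whose non-richness follows from a direct palindrome count giving $P(u) = |u|-1$.

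For the ($\Leftarrow$) direction, each listed form is verified separately. For $a^{m_1}ba^{m_3}$ and $a^{m_1}b^2a^{m_3}$, every element of $\mathtt{BR}(w)$ has run sequence of shape $(p,i,q)$ with $i \leq 2$ or $(p_1,1,s,1,p_2)$, so $l(w') \leq 5$; since neither forbidden run-length pattern $(2,2,1,1,2)$ nor $(2,1,1,2,2)$ can arise, richness follows from Proposition \ref{runlen} and Theorem \ref{tglen}. The form $ab^{m_2}a$ is handled dually, its block reversals having shape $b^{\ell}ab^sab^j$ or a degeneration. Lemma \ref{l1} handles $ab^{n_2}a^{n_3}$ and $a^{n_1}b^{n_2}a$ with $n_2 \in \{3,4\}$. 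For $w = a^2b^4a^2$, $|w|=8$ with letter counts $(4,4)$, and since every non-rich binary word of length 8 has letter counts $(5,3)$ or $(3,5)$ and block reversal preserves letter counts, all elements of $\mathtt{BR}(a^2b^4a^2)$ are rich.

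For the ($\Rightarrow$) direction, assume every element of $\mathtt{BR}(w)$ is rich, and take $n_1 \leq n_3$ via Remark \ref{o1}. The cases $n_2 \leq 2$, $n_1=n_3=1$, and ($n_1=1, n_2 \in \{3,4\}$) immediately place $w$ in the list. The remaining cases are excluded by producing a non-rich block reversal. When $n_1 = 1, n_3 \geq 2, n_2 \geq 5$, the split $w = abb \cdot b \cdot b^{n_2-3}a^{n_3}$ gives $w' = b^{n_2-3}a^{n_3}bab^2$ which, since $n_2-3 \geq 2$, contains $b^2a^{n_3}bab^2$ as a factor, forcing non-richness by Lemma \ref{rich}. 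When $n_1,n_3 \geq 2$ with $(n_1,n_3,n_2) \neq (2,2,4)$, the split $w = a^{n_1}\cdot b \cdot b^{n_2-1}a \cdot a^{n_3-1}$ (valid when $n_3 \geq 3$) gives $w' = a^{n_3-1}b^{n_2-1}aba^{n_1}$ containing the non-rich factor $a^2b^{n_2-1}aba^2$; the residual boundary $n_1=n_3=2, n_2 \geq 5$ is handled by ad hoc splits such as $a^2 \cdot b^{n_2-2} \cdot b^2a \cdot a$, which yields $w' = ab^2 ab^{n_2-2}a^2$, whose non-richness is confirmed by direct palindrome enumeration.

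The main obstacle is the direct palindrome count for length-$\geq 9$ factors outside the scope of Theorem \ref{7ric}: establishing that $b^2a^sbab^2$ (and its complement-reverses) has exactly $|u|-1$ distinct palindromic factors requires a length-by-length enumeration confirming that precisely one potential palindrome is missing. The residual case $n_1=n_3=2$ with $n_2 \geq 5$ is particularly delicate: different values of $n_2$ require different candidate splits (the uniform split $a^2 b^2 \cdot b^{n_2-4} \cdot b^2 a \cdot a$ works for $n_2 = 5$ but yields a rich reversal when $n_2 = 6$), and each resulting run-length pattern must be individually certified non-rich by such a count.
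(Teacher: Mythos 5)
There is a genuine gap in the forward direction, precisely in the case $n_1=n_3=2$, $n_2\geq 5$ (i.e.\ $w=a^2b^{n_2}a^2$ with $n_2\geq 5$), which your list must exclude. Your proposed witness $w'=ab^2ab^{n_2-2}a^2$ is in fact rich: for $n_2=5$ the word $abbabbbaa$ has the nine palindromic factors $a,b,aa,bb,bbb,bab,abba,bbabb,abbba$, and the same pattern ($a,b,$ the powers of $b$, $bab$, $abba$, $bbabb$, $ab^{n_2-2}a$ closed off by $abb^{n_2-3}ba$) persists for all $n_2\geq 5$, so no contradiction is obtained. You acknowledge that your fallback split $a^2b^2\cdot b^{n_2-4}\cdot b^2a\cdot a$ also fails for $n_2=6$, and you do not supply any witness that works for general $n_2\geq 5$; "each resulting run-length pattern must be individually certified" is not an argument for infinitely many values of $n_2$. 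The paper avoids this entirely by first bounding $n_2$: for any $n_1+n_3\geq 3$ and $n_2\geq 5$ it takes the single block reversal $a^ib^2a^jba^rb^{n_2-3}a^s$ with $i+j=n_3$, $r+s=n_1$ and $j\neq r$, whose factor $b^2a^jba^rb^2$ contains exactly two occurrences of $b^2$ and is not a palindrome, so Theorem \ref{tglen} kills all of these at once (in particular $b^2a^2bab^{n_2-3}a\in\mathtt{BR}(a^2b^{n_2}a^2)$ settles your residual case). Only then does the paper split on $n_3$ for $n_2\in\{3,4\}$. To repair your proof you need such a uniform witness for $a^2b^{n_2}a^2$, $n_2\geq 5$; everything else you wrote is essentially sound.

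Two smaller remarks. Your appeal to a "direct palindrome count $P(u)=|u|-1$" for $u=b^2a^sbab^2$ is unnecessary: $u$ itself contains exactly two occurrences of the palindrome $b^2$, as prefix and suffix only, and is not a palindrome for $s\geq 2$, so Theorem \ref{tglen} gives non-richness with no enumeration. In the converse direction, your letter-count argument for $a^2b^4a^2$ (non-rich words of length $8$ have letter counts $(5,3)$ or $(3,5)$ by Theorem \ref{7ric}, and block reversal preserves letter counts) is a nice shortcut not in the paper; the justification for the forms $a^{m_1}b^ia^{m_3}$, $i\leq 2$, via "forbidden run-length patterns" is looser than the paper's direct check with Proposition \ref{runlen} and Theorem \ref{tglen}, but reaches the correct conclusion.
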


\begin{proof}
Let $w$ be a binary word with $l(w)=3$ such that all  elements of $\mathtt{BR}(w)$ are rich. Consider $w=a^{n_1}b^{n_2}a^{n_3}$ to be the run-length encoding of $w$ where $n_i\geq 1$ for all $i$.
If $n_1=n_3=1$ or $n_2 \in \{1, 2\}$, then all  elements of $\mathtt{BR}(w)$ are rich. Here, $w=ab^{n_2}a$ or  $a^{n_1}ba^{n_3}$ or $a^{n_1}b^2a^{n_3}$. 

Now, we consider $n_1+n_3 \geq 3$ and $n_2 \geq 3$.
 Let $n_2\geq 5$ and for $j \neq r$, $i+j=n_3$ and $r+s=n_1$, consider $\alpha = a^i b^2 a^j b a^r b^{n_2-3} a^s\in \mathtt{BR}(w)$. Since, $b^2 a^j b a^r b^{2}$ is not a palindrome, by Theorem \ref{tglen}, $\alpha$ is not rich which is a contradiction. Thus, $n_2\leq 4$. So, $n_2 \in \{ 3, 4\}$. We have the following cases:
 \begin{itemize}
     \item $n_3\geq 3$ : Then, consider $\beta = a^{n_3-1} b a b^2 a^{n_1} b^{n_2-3} \in \mathtt{BR}(w)$. If $n_1\geq 2$, then by Theorem \ref{tglen}, $a^{2} b a b^2 a^{2}$ is a palindrome, which is a contradiction. Thus, if $n_3 \geq 3$, then $n_1=1$. Here, $w= a b^{n_2} a^{n_3}$  where $n_2 \in \{ 3, 4\}$ and $ n_3 \geq 3$.
     \item $n_3=2$ :  Then, consider $\beta' = a^2 b a b^{n_2-1} a^{n_1-1}\in \mathtt{BR}(w)$. If $n_1\geq 3$, then $\beta'$ is not rich. Thus, if $n_3=2$, then $n_1 \leq 2$. Thus, as $|w|\geq 8$, $w= a^2 b^4 a^2$.
     \item $n_3=1$ : Then by Lemma \ref{l1}, all  elements of $\mathtt{BR}(w)$ are rich.
     
 \end{itemize}

The converse follows from  Theorems \ref{tglen} and \ref{7ric} and Lemma \ref{l1}.
\end{proof}
We conclude the following from Propositions \ref{g8}, \ref{g7}, \ref{g6}, \ref{g5}, \ref{g4} and \ref{g3} for the binary words with $3\leq l(w) \leq 8.$
\begin{theorem}
 Let $w$ be a binary word with $|w|>7$. Then,  all  elements of $\mathtt{BR}(w)$ are  rich for
\begin{enumerate}
    \item  $l(w)=8$  iff $w=abababab$
    
    \item  $l(w)=7$  iff $w$ is $ababab^2a$, $abab^2aba$ or $ab^2ababa$
    
    \item  $l(w)=6$  iff $w\in \{u, (u^c)^R\;|\; u\in  T\}$ where $$T=\{a b^2 a b a^2 b, a b a b^2 a^2 b, a b a b a^2 b^2, a^2 b a b^2 a b, a^2 b a b a b^2, a b a^2 b^2 a b, a^{n_1} b a b a b\; |\;n_1\geq 3\}$$ 
    
    \item  $l(w)=5$  iff $w$  is  $a^{n_1}ba^{n_3}ba^{n_5}$,  $ab^{n_2}ab^{n_4}a^2$,  $a^2b^{n_2}ab^{n_4}a$, 
$ab^{n_2}a^2b^{n_4}a$ where $n_2+n_4 =4$

    \item $l(w)=4$  iff $w$ is $ab^{n_2}ab^{n_4}$ or $a^{n_1}b a^{n_3}b$ or $w\in S$ where 
\[S=\big\{
a^{n_1}b^{n_2}a^{n_3}b^{n_4}| (n_2,n_4), (n_1,n_3) \in \{(3,1),(2,2),(1,3)\}\big\} \]
\item   $l(w)=3$ iff $w$ is of one of the following forms:
\begin{itemize}
\item $a^2 b^4 a^2$, $ab^{n_2}a$,   $a^{n_1}ba^{n_3}$, $a^{n_1}b^2a^{n_3}$
\item $a b^{n_2} a^{n_3}, a^{n_1} b^{n_2} a $  where $n_2 \in \{ 3, 4\}$ and $ n_1, n_3 \geq 3$
\end{itemize}
\end{enumerate}
where $n_i\geq 1$.
\end{theorem}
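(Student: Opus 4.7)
The plan is to derive the theorem by splitting on the value of $l(w) \in \{3,4,5,6,7,8\}$ and invoking the corresponding case-analysis already established: parts (1)--(6) are precisely Propositions \ref{g8}, \ref{g7}, \ref{g6}, \ref{g5}, \ref{g4} and \ref{g3}. Thus, once those six propositions are in hand, the theorem is immediate; the statement is essentially a repackaging of them under a single umbrella for $3 \leq l(w) \leq 8$, which together with Proposition \ref{u1} (the case $l(w)=2$) completes the classification foreshadowed by the bound $2\leq l(w)\leq 8$ proved earlier.

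The substantive content sits inside those six propositions, and they all share the same engine, which I would describe as follows. Starting from the run-length encoding $w = a^{n_1}b^{n_2}\cdots$ with each $n_i \geq 1$, I exhibit carefully chosen block decompositions $w = B_1 B_2 \cdots B_t$ whose block-reversal produces a short factor of the form $x^2 \alpha x^2$ with $x \in \{a,b\}$ and $\alpha$ a simple word in the remaining $n_i$'s. Theorem \ref{tglen} then forces this factor to be a palindrome, which translates into linear constraints on the $n_i$. Several such judicious splits, combined with the symmetry $\mathtt{BR}(w^R) = (\mathtt{BR}(w))^R$ of Remark \ref{o1} (which halves the casework under $a \leftrightarrow b$ and reversal), reduce the admissible $w$'s to a finite list. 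Any residual short word is handled by Theorem \ref{7ric}.

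The main obstacle is the combinatorial explosion as $l(w)$ decreases. For $l(w) \in \{7,8\}$ the constraints extracted from a handful of reversals already leave only one or two candidate words. For $l(w) \in \{3,4\}$, however, the exponents enjoy considerable freedom, and entire infinite families such as $ab^{n_2}a$, $a^{n_1}ba^{n_3}$, $ab^{n_2}ab^{n_4}$, $a^{n_1}ba^{n_3}b$ and $a^{n_1}babab$ (with $n_1 \geq 3$) survive the elimination step. These cannot be certified rich from a single reversal, so the affirmative direction requires enumerating the block decompositions of $w$ parameterised by the number of blocks, as done in Remark \ref{rem2}, and checking that every resulting element of $\mathtt{BR}(w)$ has run-length sequence of length at most $4$, so that Proposition \ref{runlen} applies. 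Keeping this bookkeeping under control while ensuring that no sub-case slips through, and aligning the resulting finite lists with the six sets appearing in (1)--(6), is the principal technical difficulty.
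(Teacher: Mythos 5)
Your proposal is correct and takes essentially the same approach as the paper: the theorem is presented there as an immediate compilation of Propositions \ref{g8}, \ref{g7}, \ref{g6}, \ref{g5}, \ref{g4} and \ref{g3}, one for each value of $l(w)$ from $8$ down to $3$, with no additional argument. Your summary of the machinery inside those propositions (block decompositions forcing factors $x^2\alpha x^2$ to be palindromes via Theorem \ref{tglen}, the reversal symmetry of Remark \ref{o1}, Theorem \ref{7ric} for short words, and the exhaustive block enumeration of Remark \ref{rem2} together with Proposition \ref{runlen} for the infinite families) is an accurate account of how they are proved.
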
   
 
\section{Conclusions}
In this paper, we have characterized words whose block reversal contains only rich words. We have found necessary and sufficient conditions for a non-binary word such that all elements in its block reversal are rich.
For a binary word, we have showed that the result varies with the length of the run sequence of the word and the structure of the word.
In future, we would like to find an upper bound on the number of elements in the block reversal of the binary word. It would also be interesting to study other combinatorial properties such as counting primitive words, bordered and unbordered words in the block reversal set of a word.

\bibliographystyle{splncs04.bst}
\bibliography{splncs04.bib}
\newpage

\section{Appendix}
Proof of the subcase  $n_5=1$ and $n_1=1$ in Proposition \ref{g6}:
\begin{proof}
     \item  $n_1 = 1 : $ Then, $w= a b^{n_2} a^{n_3} b^{n_4} a b^{n_6}$ where $n_2\leq 2$. We have the following cases:
             \begin{itemize}
                    \item $n_2 = 2 : $ Then, $w^R = b^{n_6} a b^{n_4} a^{n_3} b^{2} a$. If $n_4 \geq 2$ or $n_6 \geq 2$, then by Theorem \ref{tglen}, $b^2 a b a^{n_3+1} b^{n_4-1+n_6} \in \mathtt{BR}(w^R)$ is not rich which is a contradiction. Thus, $n_4 = n_6=1$ and $w= a b^{2} a^{n_3} b a b $. If $n_3\geq 3$, then by Theorem \ref{tglen}, $a^{n_3-1} b a b^3 a^2 \in \mathtt{BR}(w)$ is not rich which is a contradiction.  Thus, as $|w|>7$, $n_3=2$ and $w= a b^{2} a^{2} b a b $.

                    \item $n_2 = 1 : $ Then, $w= a b  a^{n_3} b^{n_4} a b^{n_6}$. 
                    If $n_4 \geq 2$ and $n_6 \geq 2$, then  by Theorem \ref{tglen}, for $n_3 \geq 2$, $b^{n_6} a b a^{n_3} b^{n_4}a \in \mathtt{BR}(w)$ and for $n_3=1$, $b^{n_6} a^2 b a b^{n_4} \in \mathtt{BR}(w)$ are not rich which is a contradiction. Then, we have the following cases:

                        \begin{itemize}
                            \item $n_4 \geq 2$ and $n_6 = 1 : $ Then by Theorem \ref{tglen}, for $n_3 \geq 3$, $b a^{n_3-1} b^{n_4} a b a^2 \in \mathtt{BR}(w)$ is not rich which is a contradiction. For $n_3=2$, $w= a b  a^2 b^{n_4} a b$. If $n_4\geq 3$, then by Theorem \ref{tglen}, $ab b^{n_4-2} a b a^2 b^2 \in \mathtt{BR}(w)$ is not rich which is a contradiction. Thus, $n_4=2$ and  $w= a b  a^2 b^{2} a b$. For $n_3=1$, $w= a b  a b^{n_4} a b$. Since, $|w|>7$, $n_4 \geq 3$. Then, by Theorem \ref{tglen}, $b^2 a^2 b a b^{n_4-1} \in \mathtt{BR}(w)$ is not rich which is a contradiction.

                            \item $n_6 \geq 2$ and $n_4 = 1 : $ Then, $w= a b  a^{n_3} b a b^{n_6}$. Now, by Theorem \ref{tglen}, for $n_3 \geq 3$,  $a^{n_3-1}  b a b^{n_6} a^2 b\in \mathtt{BR}(w)$ is not rich which is a contradiction. For $n_3=2$,  $w= a b  a^{2} b a b^{n_6}$. Then by Theorem \ref{tglen}, for $n_6 \geq 3$, $b^{n_6-1}a^2 b a b^2 a \in \mathtt{BR}(w)$ is not rich which is a contradiction.
                            Thus, $n_6=2$ and $w= a b  a^{2} b a b^{2}$. For $n_3=1$, $w= a b  a b a b^{n_6}$.

                            \item $n_4 = 1 $ and $n_6 = 1 : $ Then, $w= a b  a^{n_3} b a b$. Now, by Theorem \ref{tglen}, for $n_3 \geq 3$, $a^{n_3-1} b a b^2 a^2 \in \mathtt{BR}(w)$ is not rich which is a contradiction. For $n_3\leq 2$, $|w|<8$. Hence, we omit this. 
                        \end{itemize}
                \end{itemize}
                
                            
                            
\end{proof}

\end{document}